\documentclass[10pt]{article}
\oddsidemargin 0 cm
\evensidemargin 0 cm
\textwidth 16.5 cm
\textheight 19.5 cm

\usepackage[dvips]{color}
\usepackage{epsfig}
\usepackage{float,amsthm,amssymb,amsfonts}
\usepackage{ amssymb,amsmath,graphicx, amsfonts, latexsym}
\begin{document}
\theoremstyle{plain}
\newtheorem{theorem}{{\bf Theorem}}[section]
\newtheorem{corollary}[theorem]{Corollary}
\newtheorem{lemma}[theorem]{Lemma}
\newtheorem{proposition}[theorem]{Proposition}
\newtheorem{remark}[theorem]{Remark}

\theoremstyle{definition}
\newtheorem{defn}{Definition}
\newtheorem{definition}[theorem]{Definition}
\newtheorem{example}[theorem]{Example}
\newtheorem{conjecture}[theorem]{Conjecture}
\def\im{\mathop{\rm Im}\nolimits}
\def\dom{\mathop{\rm Dom}\nolimits}
\def\rank{\mathop{\rm rank}\nolimits}
\def\nullset{\mbox{\O}}

\def\implies{\; \Longrightarrow \;}

\def\GR{{\cal R}}
\def\GL{{\cal L}}
\def\GH{{\cal H}}
\def\GD{{\cal D}}
\def\GJ{{\cal J}}

\def\set#1{\{ #1\} }
\def\z{\set{0}}
\def\Sing{{\rm Sing}_n}
\def\nullset{\mbox{\O}}

\title{On certain semigroups of full
contractions  of a finite chain }
\author{\bf  A. Umar and M. M. Zubairu \footnote{Corresponding Author. ~~Email: $aumar@pi.ac.ae$} \\[3mm]
\it\small Department of Mathematics, The Petroleum Institute, Sas Nakhl,\\
\it\small Khalifa University of Science and Technology, P. O. Box 2533, Abu Dhabi, UAE\\
\it\small  \texttt{aumar@pi.ac.ae}\\[3mm]
\it\small  Department of Mathematics, Bayero  University Kano, P. M. B. 3011, Kano, Nigeria\\
\it\small  \texttt{mmzubairu.mth@buk.edu.ng}\\
}
\date{\today}
\maketitle\

\begin{abstract}
 Let $[n]=\{1,2,\ldots,n\}$ be a finite chain and let  $\mathcal{T}_{n}$  be the semigroup of full transformations on $[n]$.  Let $\mathcal{CT}_{n}=\{\alpha\in \mathcal{T}_{n}: (for ~all~x,y\in [n])~\left|x\alpha-y\alpha\right|\leq\left|x-y\right|\}$, then $\mathcal{CT}_{n}$ is a  subsemigroup of $\mathcal{T}_{n}$. In this paper, we give a necessary and sufficient condition for an element  to be regular and characterize all the Green's equivalences for the semigroup $\mathcal{CT}_{n}$. We further show that the semigroup $\mathcal{CT}_{n}$ is left abundant semigroup.
 \end{abstract}
\emph{2010 Mathematics Subject Classification. 20M20.}

\section{Introduction}
 Let $[n]=\{1,2, \ldots ,n\}$ be a finite chain, a map $\alpha$ which has domain and range both subset of $[n]$ is said to be a \emph{transformation}.  A \emph{transformation} $\alpha$ whose domain is a subset of $[n]$  (i. e., $\dom~\alpha \subseteq [n]$) is said to be  \emph{partial}. The collection of all partial transformations of $[n]$ is known as semigroup of partial transformations, usually denoted by $\mathcal{P}_{n}$. A partial transformations whose domain is the whole $[n]$ (i. e., $\dom~\alpha = [n]$) is said to \emph{full}(or \emph{total}). The collection of all full (or total) transformations of $[n]$ is known as full transformation semigroup.  A map $\alpha\in \mathcal{T}_{n}$ is said to be \emph{order preserving} (resp., \emph{order reversing}) if  (for all $x,y \in [n]$) $x\leq y$ implies $x\alpha\leq y\alpha$ (resp., $x\alpha\geq y\alpha$); is \emph{order decreasing} if (for all $x\in [n]$) $x\alpha\leq x$;  an \emph{isometry} (i. e., \emph{ distance preserving}) if (for all $x,y \in [n]$) $|x\alpha-y\alpha|=|x-y|$;   a \emph{contraction} if (for all $x,y \in [n]$) $|x\alpha-y\alpha|\leq |x-y|$. Let $\mathcal{CT}_{n}=\{\alpha\in \mathcal{T}_{n}:(for ~all~x,y\in [n])~ |x\alpha-y\alpha|\leq |x-y|\}$ and $\mathcal{OCT}_{n}=\{\alpha\in \mathcal{CT}_{n}: (for ~all~x,y\in [n])~x\leq y ~ implies ~ x\alpha\leq y\alpha\}$. Then $\mathcal{CT}_{n}$  and $\mathcal{OCT}_{n}$ are  subsemigroups of $\mathcal{T}_{n}$ and  $\mathcal{{OT}}_{n}$ (where $\mathcal{{OT}}_{n}$ denotes the semigroup of order preserving full transformations of $[n]$),  respectively. They are known as semigroups of full  contractions  and order preserving partial contractions of $[n]$ respectively. The study of these semigroups was initiated in 2013 by Umar and Alkharousi \cite{af}  supported by a grant from The Research Council of Oman (TRC). In the proposal \cite{af}, notations for the semigroups and their subsemigroups were given, as such we maintain the same notations in this paper. For standard concepts in semigroup theory, we refer the reader to Howie  \cite{Howie1} and Higgins \cite{ph}.

 Let $S$ be a semigroup and $a,b\in S$. If $S^{1}a=S^{1}b$ (i. e., $a$ and $b$ generate the same principal left ideal) then we say that $a$ and $b$ are related by $\mathcal{L}$ and we write $(a,b)\in \mathcal{L}$ or $a\mathcal{L}b$, if $aS^{1}=bS^{1}$ (i. e., $a$ and $b$ generate the same principal right ideal) then we say $a$ and $b$ are related by $\mathcal{R}$ and we write $(a,b)\in \mathcal{R}$ or $a\mathcal{R}b$ and if $S^{1}aS^{1}=S^{1}bS^{1}$ (i. e., $a$ and $b$ generate the same principal two sided ideal) then we say $a$ and $b$ are related by $\mathcal{J}$ and we write $(a,b)\in \mathcal{J}$ or $a\mathcal{J}b$. Each of the relations $\mathcal{L}$, $\mathcal{R}$ and $\mathcal{J}$ is an equivalence on $S$. The relations $\mathcal{H}= \mathcal{L}\cap \mathcal{R}$ and $\mathcal{D}=\mathcal{L}\circ \mathcal{R}$ are all equivalences on $S$. These five equivalences known as Green's relations, were first introduced by J. A. Green in 1951 \cite{gr}.

  The Green's relations for the semigroup $\mathcal{CP}_{n}$ and some of its  subsemigroups have been studied by  \cite{mmz, az, py}.  In 2017, Garba \emph{et al.} \cite{garbac} characterized Green's and  starred Green's relations for the semigroup $\mathcal{CT}_{n}$. However, their characterization of Green's $\mathcal{L}$ relation is incomplete, which  also affects the $\mathcal{D}$ relation characterization. In 2018, Ali \emph{et al.} characterized Green's relations for the more general semigroup $\mathcal{CP}_{n}$, and results concerning Green's relations and regularity of elements in some of its subsemigroups can be deduced from the results obtained in that paper.

  In this paper we obtain complete characterizations of Green's relations for the semigroup of full contractions $\mathcal{CT}_{n}$. We deduce corresponding results for regularity of elements in the semigroup $\mathcal{CT}_{n}$ and its subsemigroups of order preserving full contractions and of order preserving or order reversing  full contractions,  $\mathcal{OCT}_{n}$ and  $\mathcal{ORCT}_{n}$, respectively, from Theorem~\eqref{reeg}. We further show that the semigroup $\mathcal{CT}_{n}$ is \emph{left abundant} but not \emph{right abundant} (for $n\geq 4$) and explore some \emph{orthodox} subsemigroups of $\mathcal{CT}_{n}$ and its {\emph Rees factor} semigroups. This paper is therefore a natural analogue of Umar and Zubairu \cite{az}. We now introduce some basic definitions to achieve our objective.

 Let $\alpha$ be element of $\mathcal{CP}_{n}$. Let  $\dom\ \alpha$, $\im~\alpha$ and  $h~(\alpha)$  denote, the domain of $\alpha$, image  of $\alpha$ and $|\im~\alpha|$, respectively. For $\alpha,\beta \in \mathcal{CT}_{n}$,  the composition of $\alpha$ and $\beta$ is defined as $x(\alpha \circ \beta) =((x)\alpha)\beta$ for any $x$ in $[n]$.  Without ambiguity, we shall be using the notation $\alpha\beta$ to denote $\alpha \circ\beta$.

   Let
    \begin{equation}\label{1}
    \alpha=\left( \begin{array}{cccc}
                           A_{1} & A_{2} & \ldots & A_{p} \\
                           x_{1} & x_{2} & \ldots & x_{p}
                         \end{array}
   \right)\in \mathcal{T}_{n}  ~~  (1\leq p\leq n),
    \end{equation}

     where, $x_{i}\alpha^{-1}=A_{i}$ ($1\leq i\leq p$) are equivalence classes under the relation $ker\alpha=\{(x,y)\in n\times n: x\alpha=y\alpha\}$. The collection of all the equivalence classes of the relation $ker\alpha$, is the partition of the domain of  $\alpha$, and is denoted by $\textbf{Ker}~\alpha$, i. e., $\textbf{Ker}~\alpha=\{A_{1}, A_{2}, \ldots A_{p}\}$ and $\dom~\alpha=A_{1}\cup A_{2}\cup\ldots A_{p}$ where $(p\leq n)$. A subset $T_{\alpha}$ of $[n]$ is said to be a \emph{transversal} of the partition $\textbf{Ker}~\alpha$ if $|T_{\alpha}|=p$, and $|A_{i}\cap T_{\alpha}|=1$ ($1\leq i\leq p$). A transversal  $T_{\alpha}$  is said to be \emph{relatively convex} if for all $x,y\in T_{\alpha}$ with $x\leq y$ and if $x\leq z\leq y$ ($z\in \dom~\alpha$), then $z\in T_{\alpha}$. Notice that every convex transversal is necessarily relatively convex but not \emph{vice-versa}.

A transversal $T_{\alpha}$ is said to be \emph{admissible} if and only if the map $A_{i}\mapsto t_{i}$  ($t_{i}\in T_{\alpha},\, i\in\{1,2,\ldots,p\}$) is a contraction. Notice that every (relatively) convex transversal is admissible but not \emph{vice-versa},  see \cite{mmz}.

     A map $\alpha\in \mathcal{T}_{n}$ is said to be an \emph{isometry} if and only if $|x\alpha-y\alpha|=|x-y|$ for all $x,y\in \dom~\alpha$. If we consider $\alpha$ as expressed in \eqref{1}, then $\alpha$ is an isometry if and only if $|x_{i}-x_{j}|=|a_{i}-a_{j}|$ for all $a_{i}\in A_{i}$ and $a_{j}\in A_{j}$ ($i,j\in\{1,2,\ldots,p\}$). Notice that this forces the blocks $A_{i}$ ($i=1,\ldots,p$) to be singletons, because $\alpha$ is one$-to-$one. In other words, $\alpha$ is an isometry if and only if $\dom~{\alpha}=\{a_{i}: 1\leq i\leq p\}=\{x_{i} + e: 1\leq i\leq p\} = (\dom~{\alpha})\alpha + e$ (called a \emph{translation}) or $\dom~{\alpha}=\{a_{i}: 1\leq i\leq p\}=\{x_{p-i+1} + e: 1\leq i\leq p\} = (\dom~{\alpha})\alpha + e$  (called a \emph{reflection}) for some integer $e$. An element $a$ in a semigroup $S$ is said to be an idempotent if and only if $a^{2}=a$. It is well known that  an element $\alpha\in \mathcal{P}_{n}$  is  an idempotent if and only if $\im~\alpha=F(\alpha)=\{x\in \dom~\alpha: x\alpha =x\}$. Equivalently, $\alpha$ is an idempotent if and only if $x_{i}\in A_{i}$ for  $1\leq i\leq p$, that is to say that the \emph{blocks} $A_i$ are \emph{stationary}.

     An element $a\in S$ is said to be \emph{regular} if there exists $b\in S$ such that $a=aba$. The following results from \cite{mmz} will be useful in the proofs of some of the results in the subsequent sections.

     \begin{theorem}[\cite{mmz}, Theorem 2.1]\label{reeg} Let $\alpha\in \mathcal{CP}_{n}$ be  as expressed in \eqref{1}. Then $\alpha$ is regular if and only if  there exists an admissible transversal $T_{\alpha}$ of $\textbf{Ker}~\alpha$ such that $\left|t_{j}-t_{i}\right|=\left|t_{j}\alpha-t_{i}\alpha\right|$ for all $t_{j}, t_{i}\in T_{\alpha}$ ($i,j\in \{1,2,\ldots,p\}$). Equivalently, $\alpha$ in $\mathcal{CP}_{n}$ is regular if and only if there exists an admissible transversal $T_{\alpha}$, such that the map $t_{i}\mapsto x_{i}$  ($i\in\{1,2,\ldots, p\}$) is an isometry.
\end{theorem}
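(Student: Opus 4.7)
The plan is to prove the two implications separately, using the candidate $t_i=x_i\beta$ (coming from a putative inverse $\beta$) to link the two directions. Throughout I write $\alpha$ as in \eqref{1}, so that $t_i\alpha=x_i$ whenever $t_i\in A_i$.

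For the sufficient direction, I would assume an admissible transversal $T_\alpha=\{t_1,\ldots,t_p\}$ exists with $|t_i-t_j|=|t_i\alpha-t_j\alpha|$ for all $i,j$, and define $\beta$ by $\dom\,\beta=\im\,\alpha$ and $x_i\beta=t_i$. Using $t_i\alpha=x_i$, the hypothesis reads $|x_i\beta-x_j\beta|=|x_i-x_j|$, so $\beta$ is an isometry on its domain and hence a contraction, giving $\beta\in\mathcal{CP}_n$. Tracing $x\in A_i$ through $x\xrightarrow{\alpha}x_i\xrightarrow{\beta}t_i\xrightarrow{\alpha}x_i$ then yields $\alpha\beta\alpha=\alpha$, so $\alpha$ is regular.

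For the necessary direction, suppose $\alpha\beta\alpha=\alpha$ with $\beta\in\mathcal{CP}_n$ and put $t_i:=x_i\beta$. Applying the equation to any element of $A_i$ forces $t_i\alpha=x_i$, so $t_i\in A_i$ and $T_\alpha:=\{t_1,\ldots,t_p\}$ is a transversal of $\textbf{Ker}\,\alpha$. The distance identity then falls out of the two-sided squeeze
\begin{equation*}
|t_i-t_j|=|x_i\beta-x_j\beta|\leq|x_i-x_j|=|t_i\alpha-t_j\alpha|\leq|t_i-t_j|,
\end{equation*}
whose outer inequalities use that $\beta$ and $\alpha$ respectively are contractions, forcing equality throughout. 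Admissibility follows from the same chain with arbitrary $a\in A_i$, $b\in A_j$ on the outside, namely $|t_i-t_j|\leq|x_i-x_j|=|a\alpha-b\alpha|\leq|a-b|$, so the assignment $A_i\mapsto t_i$ is a contraction. The equivalent reformulation is then automatic, since $t_i\alpha=x_i$ converts the distance equality into the statement that $t_i\mapsto x_i$ preserves distances. The step I expect to require the most thought is spotting the correct candidate $t_i=x_i\beta$ in the necessary direction; once that is in place, both the distance identity and admissibility come out of the same contraction sandwich, and the argument is driven cleanly by the two contraction inequalities together with the defining identity $\alpha\beta\alpha=\alpha$.
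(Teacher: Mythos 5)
Your proof is correct and complete: the candidate $t_i=x_i\beta$ extracted from a pre-inverse, the contraction sandwich $|t_i-t_j|\leq|x_i-x_j|\leq|t_i-t_j|$, and the explicit construction of the isometric $\beta$ on $\im\,\alpha$ for the converse are exactly the standard argument for this result. The paper itself imports Theorem~\ref{reeg} from \cite{mmz} without reproducing a proof, so there is nothing in the source to contrast with, but your argument matches the one given there in all essentials.
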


\begin{lemma}[\cite{mmz}, Theorem 3.1]\label{bb} For every  $\alpha\in \mathcal{CP}_{n}$, the partition $\textbf{Ker}~\alpha$   have   a maximum finer partition say $\textbf{Ker}~\gamma$ (for some $\gamma\in \mathcal{CP}_{n}$) with  convex transversals.
\end{lemma}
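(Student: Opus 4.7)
The plan is to identify the coarsest refinement $\pi^*$ of $\textbf{Ker}~\alpha$ admitting a relatively convex transversal and to realize it as $\textbf{Ker}~\gamma$ for some $\gamma \in \mathcal{CP}_{n}$. The crucial reformulation is: writing $D = \dom~\alpha = \{d_1 < \cdots < d_m\}$, any relatively convex transversal of a partition $\pi$ of $D$ is necessarily an interval $T = \{d_a, \ldots, d_b\}$ of $D$ meeting each block of $\pi$ exactly once. Hence a refinement $\pi$ of $\textbf{Ker}~\alpha$ admits a convex transversal iff some interval $T \subseteq D$ hits every $A_i$, and in that case $\pi$ subdivides each $A_i$ into $|T \cap A_i|$ contiguous sub-blocks, one per $T$-representative.

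With this reformulation, I would let $T^*$ be a shortest interval of $D$ meeting every $A_i$ (which exists since $T = D$ always works), and take $\pi^*$ to be the corresponding refinement: each $A_i$ is partitioned into sub-blocks $A_i^1, \ldots, A_i^{n_i}$ with $n_i = |T^* \cap A_i|$, where $A_i^j$ contains the $j$-th element $t_i^{(j)}$ of $T^* \cap A_i$ in increasing order---the extremal sub-blocks absorbing the $A_i$-elements lying to the left or right of $T^*$, and middle representatives forming singleton sub-blocks. To realize $\pi^* = \textbf{Ker}~\gamma$: if $\pi^* = \textbf{Ker}~\alpha$, simply take $\gamma = \alpha$; otherwise define $\gamma(d) = t_i^{(j)}$ for $d \in A_i^j$, so that $\gamma$ fixes $T^*$ pointwise and sends each outside element to its sub-block's representative.

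Maximality follows from the reformulation: any refinement $\pi'$ with convex transversal $T'$ satisfies $|T'| = |\pi'|$, and $T'$ is an interval of $D$ hitting every $A_i$; by minimality of $T^*$, $|T'| \geq |T^*|$, so $|\pi'| \geq |\pi^*|$, making $\pi^*$ the coarsest valid refinement.

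The main obstacle is verifying that $\gamma$ is a contraction in the nontrivial case. The projection onto representatives can in principle stretch distances, so the inequality $|\gamma(d) - \gamma(d')| \leq |d - d'|$ must be checked carefully, since for poorly chosen $T^*$ it can genuinely fail (for instance, when a block of $\textbf{Ker}~\alpha$ has elements far outside $T^*$ whose only available representative lies at the opposite end). The plan is to argue that a shortest $T^*$ can always be selected so that the projection respects contraction: the key input is the contraction property of $\alpha$ itself, together with the structural observation that, if $T^*$ is shortest, any $A_i$ extending outside $T^*$ does so only on one side (else $T^*$ would be shortenable), which prevents a catastrophic collapse of distances.
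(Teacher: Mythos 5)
First, note that the paper gives no proof of this lemma: it is imported verbatim from \cite{mmz} (Theorem 3.1 there), so there is no in-paper argument to compare yours against; I am judging the proposal on its own terms. Your reformulation of refinements-with-(relatively) convex transversals as intervals of $\dom~\alpha$ hitting every block is sound and is a reasonable way to organize the proof, but as written the proposal has two genuine gaps.

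The first gap is the one you yourself flag and then do not close: showing that the quotient map $\gamma$ (sending each element to its sub-block's representative in $T^{*}$) is a contraction. This is the substantive content of the lemma --- without it you have only produced a refinement of $\textbf{Ker}~\alpha$ as a set partition, not one of the form $\textbf{Ker}~\gamma$ for $\gamma\in\mathcal{CP}_{n}$. Worse, the structural claim you propose to use, namely that a shortest hitting interval $T^{*}$ forces every block to protrude on at most one side of $T^{*}$, is false. Take $\alpha=\left(\begin{array}{ccc}\{1,3,4,5,7\}&2&6\\2&1&3\end{array}\right)\in\mathcal{CT}_{7}$: this is a contraction, the unique shortest interval hitting all three blocks is $\{2,3,4,5,6\}$ (it must contain $2$ and $6$), and the block $\{1,3,4,5,7\}$ sticks out on both sides. (The induced $\gamma$ happens to be a contraction here, but your intended reason for that is not available.) A correct argument must use the contraction property of $\alpha$ itself to bound how far a representative $\min(A_{i}\cap T^{*})$ or $\max(A_{i}\cap T^{*})$ can sit from the elements it absorbs; that estimate is nowhere in the proposal.

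The second gap is the maximality step. You show that any valid refinement $\pi'$ satisfies $|\pi'|\geq|\pi^{*}|$, i.e.\ $\pi^{*}$ has the fewest blocks among refinements admitting a convex transversal. That is strictly weaker than being the \emph{maximum} finer partition, which requires $\ker\theta\subseteq\ker\gamma$ for every valid refinement $\ker\theta$ of $\ker\alpha$ --- i.e.\ comparability in the refinement order, not just a cardinality inequality. Two distinct shortest hitting intervals could a priori induce incomparable refinements, and a longer interval could induce a refinement that does not refine $\pi^{*}$; you must either prove the coarsest valid refinement is unique and refined by all others, or pass to the common refinement of all maximal ones (as the paper's definition of ``maximum'' suggests) and then re-verify that this intersection still carries a convex transversal and is still the kernel of a contraction. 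Neither is done.
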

\begin{lemma}[\cite{mmz} Lemma 1.4] Let           $\alpha\in \mathcal{ORCP}_{n}$ be as expressed in \eqref{1} where   $\textbf{Ker}~\alpha=\{A_{1}<A_{2}<\ldots<A_{p}\}$  (for $n\geq 4$). If there exists $k\in \{2,\ldots,p-1\}$  such that $|A_{k}|\geq 2$ then $\textbf{Ker}~\alpha$  have no convex transversal.
\end{lemma}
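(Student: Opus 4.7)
The plan is to argue by contradiction: suppose $T_{\alpha}=\{t_{1},t_{2},\ldots,t_{p}\}$ is a convex transversal of $\textbf{Ker}~\alpha$ with $t_{i}\in A_{i}$. First I would record the structural consequence of $\alpha$ being order preserving or order reversing: in either case the blocks $A_{1},A_{2},\ldots,A_{p}$ are convex subsets of $[n]$ (if $x,y$ lie in the same block and $x<z<y$, the order relation collapses $z\alpha$ onto $x\alpha=y\alpha$), and the hypothesis $A_{1}<A_{2}<\ldots<A_{p}$ is then literal: every element of $A_{i}$ is strictly less than every element of $A_{i+1}$. Hence the chosen representatives satisfy $t_{1}<t_{2}<\ldots<t_{p}$.

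Next I would unpack the stronger notion of \emph{convex} transversal (as distinguished from \emph{relatively convex} in the preceding definition): $T_{\alpha}$ is convex in $[n]$, so applying convexity to $t_{1}$ and $t_{p}$ forces $T_{\alpha}$ to contain every integer of the interval $[t_{1},t_{p}]$. Combined with $|T_{\alpha}|=p$, this yields $t_{p}-t_{1}+1=p$, i.e.\ $T_{\alpha}$ is a block of $p$ consecutive integers and $t_{i+1}=t_{i}+1$ for each $i\in\{1,\ldots,p-1\}$.

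Finally I would derive the contradiction from the middle block hypothesis. Fix $k\in\{2,\ldots,p-1\}$ with $|A_{k}|\geq 2$ and pick $a\in A_{k}$ with $a\neq t_{k}$. From $A_{k-1}<A_{k}<A_{k+1}$ we get $t_{k-1}<a<t_{k+1}$; but $t_{k+1}-t_{k-1}=2$, so the only integer strictly between $t_{k-1}$ and $t_{k+1}$ is $t_{k}$, forcing $a=t_{k}$, a contradiction. The main (and only substantive) obstacle is the preliminary step, verifying that for order preserving or order reversing contractions the blocks are genuinely convex and linearly separated so that the notation $A_{1}<\ldots<A_{p}$ is meaningful; once that is in hand, the lemma reduces to the one-step counting argument above.
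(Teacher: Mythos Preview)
The paper does not supply a proof of this lemma; it is quoted verbatim from \cite{mmz} (Lemma~1.4) and then used as input to the corollaries that follow. So there is no ``paper's own proof'' to compare against.

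Your argument is correct and is the natural one. One small remark: since the statement is about $\mathcal{ORCP}_{n}$ (partial maps), the sentence ``the blocks $A_{1},\ldots,A_{p}$ are convex subsets of $[n]$'' is not literally true in general---$\dom~\alpha$ need not be convex, so a block can have gaps in $[n]$. Your parenthetical justification (``if $x<z<y$ with $x\alpha=y\alpha$ then $z\alpha=x\alpha$'') only applies when $z\in\dom~\alpha$, so what you actually establish is that each $A_{i}$ is \emph{relatively} convex in $\dom~\alpha$. Fortunately this is irrelevant to the rest of your proof: the only facts you use downstream are (a) that every element of $A_{i}$ is strictly less than every element of $A_{i+1}$, which follows directly from order preservation/reversal without any convexity, and (b) that a convex transversal of size $p$ is a run of $p$ consecutive integers. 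The contradiction from the extra element $a\in A_{k}$ squeezed strictly between $t_{k-1}$ and $t_{k+1}=t_{k-1}+2$ then goes through exactly as you wrote it.
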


\begin{corollary}\label{d1} Let    $\alpha\in \mathcal{OCRT}_{n}$   and    $\textbf{Ker}~\alpha=\{A_{1}<A_{2}<\ldots<A_{p}\}$  (for $n\geq 4$). If there exists $k\in \{2,\ldots,p-1\}$  such that $|A_{k}|\geq 2$ then $\textbf{Ker}~\alpha$  have no convex transversal.
\end{corollary}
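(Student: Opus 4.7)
The plan is to recognize this as an immediate specialization of the preceding Lemma (the result from \cite{mmz}, Lemma 1.4). The Lemma is stated for the larger semigroup $\mathcal{ORCP}_{n}$ of partial order preserving or order reversing contractions, while the Corollary restricts to the subsemigroup $\mathcal{ORCT}_{n}$ of full such maps. So the entire content of the proof is the observation $\mathcal{ORCT}_{n}\subseteq \mathcal{ORCP}_{n}$, which is immediate since a full transformation of $[n]$ is in particular a partial transformation whose domain happens to be all of $[n]$, and the order preservation or reversal and contractivity conditions are the same in either setting.

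Concretely, I would let $\alpha\in\mathcal{ORCT}_{n}$ with $\textbf{Ker}~\alpha=\{A_{1}<A_{2}<\ldots<A_{p}\}$ be as in the hypothesis, pick an interior index $k\in\{2,\ldots,p-1\}$ with $|A_{k}|\geq 2$, and then view $\alpha$ as an element of $\mathcal{ORCP}_{n}$. Since the partition $\textbf{Ker}~\alpha$ and its ordering are intrinsic data of $\alpha$ (independent of whether one regards $\alpha$ as partial or full), the hypotheses of the preceding Lemma are satisfied verbatim. The Lemma then yields that $\textbf{Ker}~\alpha$ admits no convex transversal, which is exactly the conclusion of the Corollary.

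There is essentially no obstacle here: no new combinatorial or order-theoretic argument is introduced beyond the inclusion of semigroups. The only minor subtlety worth flagging in the writeup is the (apparent) typographical discrepancy between $\mathcal{OCRT}_{n}$ in the statement and $\mathcal{ORCT}_{n}$ used elsewhere in the paper; I would silently read these as the same object, namely the semigroup of order preserving or order reversing full contractions, so that the containment in $\mathcal{ORCP}_{n}$ is unambiguous.
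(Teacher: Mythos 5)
Your proof is correct and matches the paper's (implicit) reasoning: the corollary is stated without proof precisely because it is the immediate specialization of the preceding Lemma via the inclusion $\mathcal{ORCT}_{n}\subseteq\mathcal{ORCP}_{n}$, and your reading of $\mathcal{OCRT}_{n}$ as a typo for $\mathcal{ORCT}_{n}$ is the right call.
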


\begin{corollary}

 Let    $\alpha$ be in  $\mathcal{CP}_{n}$ (resp., $\mathcal{CT}_{n}$)   $\mathcal{ORCP}_{n}$ (resp., $\mathcal{ORCT}_{n}$),    and    $\textbf{Ker}~\alpha=\{A_{1}<A_{2}<\ldots<A_{p}\}$  (for $n\geq 4$).  If    $|A_{k}|=1$  for all  $k\in \{2,\ldots,p-1\}$    then $\textbf{Ker}~\alpha$  have a convex transversal.

\end{corollary}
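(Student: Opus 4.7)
The plan is to write down an explicit candidate transversal and verify its defining property by a short case analysis. First, under the hypothesis, I would label the middle blocks as $A_k = \{a_k\}$ for each $k \in \{2, \ldots, p-1\}$ and propose
$$T_\alpha = \{\max A_1,\ a_2,\ a_3,\ \ldots,\ a_{p-1},\ \min A_p\}.$$
It is immediate that $|T_\alpha| = p$ and $|T_\alpha \cap A_i| = 1$ for every $i$, so $T_\alpha$ is a transversal of $\textbf{Ker}~\alpha$.

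Next I would check the convexity condition directly. Given arbitrary $x, y \in T_\alpha$ with $x \leq y$ and any $z \in \dom~\alpha$ with $x \leq z \leq y$, write $x \in A_i$, $y \in A_j$, $z \in A_k$; the block ordering $A_1 < A_2 < \ldots < A_p$ forces $i \leq k \leq j$. If $i < k < j$ then $k$ is a middle index, so $A_k = \{a_k\}$ and $z = a_k \in T_\alpha$. The remaining boundary subcases are: if $k = i = 1$ then $x = \max A_1$ and $z \in A_1$ with $x \leq z$ together force $z = x$; dually, if $k = j = p$ then $y = \min A_p$ and $z \in A_p$ with $z \leq y$ force $z = y$; and if $k = i$ or $k = j$ with the common index lying in $\{2, \ldots, p-1\}$, then $A_k$ is already a singleton and $z$ must equal the unique representative of $A_k$ in $T_\alpha$.

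I expect no serious obstacle: the argument is essentially bookkeeping, and it works precisely because the only blocks where a representative must be \emph{chosen} are the outer blocks $A_1$ and $A_p$, and the extremal picks $\max A_1$ and $\min A_p$ point inward and therefore absorb every other element of those blocks that the convexity condition could otherwise invoke. The only care required is to list the boundary subcases cleanly; once this is done the corollary follows immediately, and since every convex transversal is admissible, the construction also yields an admissible transversal of $\textbf{Ker}~\alpha$.
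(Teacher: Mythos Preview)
Your argument is correct. The paper itself states this corollary without proof, treating it as an immediate consequence of the preceding lemma; your explicit construction $T_\alpha=\{\max A_1,a_2,\ldots,a_{p-1},\min A_p\}$ together with the short case analysis is exactly the natural verification the authors have in mind, and nothing more is needed. One small remark: what you verify is literally the \emph{relative} convexity condition (intermediate points of $\dom\alpha$ lie in $T_\alpha$), which for the full-transformation cases $\mathcal{CT}_n$ and $\mathcal{ORCT}_n$ coincides with ordinary convexity since the hypothesis $A_1<A_2<\cdots<A_p$ on a partition of $[n]$ forces the blocks to be consecutive intervals, so that $T_\alpha$ is genuinely the interval $\{\max A_1,\max A_1+1,\ldots,\min A_p\}$.
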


The following lemma is from \cite{adu}.
 \begin{lemma}[\cite{adu}, Lemma 1.2]\label{tc}  Let $\alpha\in \mathcal{CT}_{n}$ and let $|\im~\alpha|=p$. Then $\im~\alpha$ is convex.
\end{lemma}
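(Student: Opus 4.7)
The plan is to prove the statement by a straightforward discrete intermediate value argument. The contraction hypothesis forces consecutive elements of $[n]$ to have images differing by at most $1$, and this in turn prevents the image from having any ``gaps.''

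First I would set $a = \min(\im~\alpha)$ and $b = \max(\im~\alpha)$, and pick any $u, v \in [n]$ with $u\alpha = a$ and $v\alpha = b$. Without loss of generality I would assume $u \leq v$ (otherwise swap the roles; the ensuing walk just runs in the opposite direction). The goal is then to show $\{a, a+1, \ldots, b\} \subseteq \im~\alpha$, because the reverse inclusion is immediate from the definitions of $a$ and $b$.

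Next I would exploit the contraction property on consecutive integers: for every $k$ with $u \leq k < v$ we have $|(k)\alpha - (k+1)\alpha| \leq |k - (k+1)| = 1$. Thus the finite sequence
\[
u\alpha,\; (u+1)\alpha,\; \ldots,\; v\alpha
\]
is a sequence of integers starting at $a$, ending at $b$, with consecutive terms differing by at most $1$. A completely elementary induction on the length of such a sequence shows that its set of values must contain every integer between its minimum and maximum, so in particular every integer in $\{a, a+1, \ldots, b\}$ occurs as some $(k)\alpha$ and therefore lies in $\im~\alpha$.

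Combining the two inclusions gives $\im~\alpha = \{a, a+1, \ldots, b\}$, which is convex. There is no real obstacle here; the only point that needs care is the ``discrete intermediate value'' step in the previous paragraph, and even that reduces to the obvious fact that a sequence of integers whose successive differences have absolute value at most $1$ cannot skip any integer between its endpoints. Crucially the argument uses only that $\dom~\alpha = [n]$ is itself a convex chain (so that the walk from $u$ to $v$ stays inside $\dom~\alpha$), which is why the statement is given for $\mathcal{CT}_n$ rather than the partial analogue.
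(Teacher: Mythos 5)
Your proof is correct: the discrete intermediate value argument along the walk from $u$ to $v$, using the contraction inequality on consecutive integers and the fact that $\dom~\alpha=[n]$ is the whole chain, is exactly the standard way to establish convexity of the image. The paper itself imports this lemma from the reference without reproducing a proof, but your argument is complete and is the expected one.
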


     Now, let
     \begin{equation}\label{2} \alpha=\left(\begin{array}{cccc}
                                                                            A_{1} & A_{2} & \ldots & A_{p} \\
                                                                            x_{1} & x_{2} & \ldots & x_{p}
                                                                          \end{array}
\right)~~ and~~ \beta=\left(\begin{array}{cccc}
                                                                            B_{1} & B_{2} & \ldots & B_{p} \\
                                                                            y_{1} & y_{2} & \ldots & y_{p}
                                                                          \end{array}
\right)\in {\cal CP}_n~~ (p\leq n).\end{equation}
 The following result itemized as (i), (ii) and (iii) are Theorems 2.4, 2.5 and 2.6, respectively in \cite{mmz}.

\begin{theorem}[\cite{mmz}, Theorems 2.3, 2.5 $\mbox{\&}$ 2.6]\label{14} Let $\alpha, \beta\in \mathcal{CP}_{n}$ be as expressed in \eqref{2}. Then
\begin{itemize}

\item[(i)] $(\alpha, \beta)\in \mathcal{L}$ if and only if   $\textbf{Ker}~\alpha$ and $\textbf{Ker}~\beta$ have  admissible finer partitions,  $\textbf{Ker}~\gamma_{1}$ and $\textbf{Ker}~\gamma_{2}$ (for some $\gamma_{1}$ and $\gamma_{2}$ in $\mathcal{CP}_{n}$), respectively, such that there exists either a translation $\tau_{i}\mapsto \sigma_{i}$ and $\tau_{i}\alpha= \sigma_{i}\beta$ or a reflection $\tau_{i}\mapsto \sigma_{s-i+1}$ and $\tau_{i}\alpha= \sigma_{s-i+1}\beta$ for all $i=1,\ldots,s$ ($s\geq p$), where $A_{\alpha}=\{\tau_{1}, \ldots,\tau_{s}\}$ and $B_{\alpha}=\{\sigma_{1}, \ldots,\sigma_{s}\}$, are  the admissible transversals of $\textbf{Ker}~\gamma_{1}$ and $\textbf{Ker}~\gamma_{2}$, respectively.
\item[(ii)]$(\alpha, \beta)\in \mathcal{R}$ if and only if $ker\alpha=ker\beta$ and there exists either a translation $x_{i}\mapsto y_{i}$ or a reflection $x_{i}\mapsto y_{p-i+1}$ ($1\leq i\leq p$).

\item[(iii)] $(\alpha,\beta)\in \mathcal{D}$ if and only if there exist isometries $\vartheta_{1}$ and $\vartheta_{2}$ from $\textbf{Ker}~\gamma_{1}$ to $\textbf{Ker}~\gamma_{2}$ and  from $\im~\alpha$ to $\im~\beta$, where $\textbf{Ker}~\gamma_{1}$ and $\textbf{Ker}~\gamma_{2}$ (for some $\gamma_{1}$ and $\gamma_{2}$ in $\mathcal{CP}_{n}$) are maximum admissible finer partitions of  $\textbf{Ker}~\alpha$ and $\textbf{Ker}~\beta$, respectively.
\end{itemize}
\end{theorem}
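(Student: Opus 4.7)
The plan is to prove parts (i), (ii), (iii) in turn, in each case viewing the Green relation on $\mathcal{CP}_n$ as inherited from $\mathcal{P}_n$ but with the extra requirement that the witnessing multipliers themselves lie in $\mathcal{CP}_n$, i.e., are contractions. Lemma \ref{tc} (convexity of the image) and Lemma \ref{bb} (existence of maximum admissible finer partitions) will be the two main tools.

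I would begin with part (ii), which is cleanest. Assume $\alpha\rho_1=\beta$ and $\beta\rho_2=\alpha$ with $\rho_1,\rho_2\in\mathcal{CP}_n^{1}$. A routine argument gives $\ker\alpha=\ker\beta$. The restrictions $\rho_1|_{\im\alpha}$ and $\rho_2|_{\im\beta}$ are mutually inverse contractions between $\im\alpha$ and $\im\beta$, hence distance preserving, hence isometries. Because $\im\alpha$ and $\im\beta$ are convex intervals of $[n]$ of the same length (Lemma \ref{tc}), the isometry $x_i\mapsto x_i\rho_1$ is either a translation $x_i\mapsto y_i$ or a reflection $x_i\mapsto y_{p-i+1}$. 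For the converse I would extend the given isometry to all of $[n]$ by sending points below $\min\im\alpha$ to one endpoint of $\im\beta$ and points above $\max\im\alpha$ to the other endpoint, and check that this extension is a contraction satisfying $\alpha\rho_1=\beta$.

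For part (i), the forward direction is the crux. From $\lambda\alpha=\beta$ one sees that $\lambda$ carries each block $B_i$ of $\textbf{Ker}~\beta$ into a single block of $\textbf{Ker}~\alpha$, and symmetrically for $\mu$ from $\mu\beta=\alpha$. Invoking Lemma \ref{bb} I would pass to maximum admissible (in fact convex) finer partitions $\textbf{Ker}~\gamma_1$ and $\textbf{Ker}~\gamma_2$ of $\textbf{Ker}~\alpha$ and $\textbf{Ker}~\beta$ and read off admissible transversals $A_\alpha=\{\tau_i\}$ and $B_\alpha=\{\sigma_i\}$. The contraction inequalities for $\lambda$ and $\mu$, taken together, force the induced map between these transversals to be distance preserving; since the transversals are convex subsets of $[n]$, such an isometry must be a translation or a reflection, delivering either $\tau_i\alpha=\sigma_i\beta$ or $\tau_i\alpha=\sigma_{s-i+1}\beta$. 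For the converse I would use admissibility to extend the given translation or reflection on the transversals to a contraction $\lambda\in\mathcal{CP}_n$ with $\lambda\alpha=\beta$; the delicate point is handling points of $[n]$ outside the transversal's convex hull, which I would send to the nearest transversal value and verify by a short case analysis that the extension remains a contraction.

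Part (iii) then follows via $\mathcal{D}=\mathcal{L}\circ\mathcal{R}$. Given $\alpha\mathcal{D}\beta$, choose $\delta$ with $\alpha\mathcal{L}\delta$ and $\delta\mathcal{R}\beta$: part (i) applied to $(\alpha,\delta)$ supplies the isometry $\vartheta_1$ between maximum admissible finer kernel partitions, and part (ii) applied to $(\delta,\beta)$ (using $\ker\delta=\ker\beta$) supplies the isometry $\vartheta_2\colon\im\alpha\to\im\beta$. Conversely, given such $\vartheta_1$ and $\vartheta_2$, I would construct an intermediate $\delta\in\mathcal{CP}_n$ whose admissible finer kernel partition is transported from $\textbf{Ker}~\gamma_1$ by $\vartheta_1$ and whose image is $\im\alpha$, and then verify $\alpha\mathcal{L}\delta\mathcal{R}\beta$ using parts (i) and (ii). The main obstacle throughout is part (i): converting a single opaque contraction witness $\lambda$ into explicit isometry data on a suitably refined admissible transversal, and dually extending such a transversal-level isometry to a global contraction, both of which require careful bookkeeping against the admissibility condition and the chain structure of $[n]$.
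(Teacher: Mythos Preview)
The paper does not give its own proof of this theorem: it is quoted verbatim from \cite{mmz} (Theorems 2.3, 2.5 and 2.6 there), so there is nothing in the present paper to compare your argument against.

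That said, your sketch has a recurring slip worth flagging. The theorem concerns $\mathcal{CP}_n$, but you repeatedly invoke Lemma~\ref{tc}, which is stated and valid only for \emph{full} contractions $\mathcal{CT}_n$. In $\mathcal{CP}_n$ neither $\im\alpha$ nor an admissible transversal need be convex (e.g.\ the identity on $\{1,3\}$), so the phrases ``convex intervals of $[n]$'' in part (ii) and ``admissible (in fact convex) finer partitions'' in part (i) are not justified. Fortunately, convexity is not actually needed: a bijective contraction with contractive inverse between two finite subsets of $[n]$ is automatically an isometry, and any isometry of finite subsets of $\mathbb{Z}$ is a translation or reflection regardless of convexity; likewise in the converse of (ii) you may simply take $\rho_1$ to be the isometry itself as a partial map, with no extension necessary. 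If you strip out the appeals to Lemma~\ref{tc} and the ``in fact convex'' parenthetical, the overall strategy (mutual contraction witnesses force isometry on suitable transversals; Lemma~\ref{bb} supplies the admissible refinements; $\mathcal{D}=\mathcal{L}\circ\mathcal{R}$) is the expected one and should go through.
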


 We pause here to describe the shortcomings of the characterization of Green's $\mathcal{L}$ relation on $\mathcal{CT}_{n}$ in Garba \emph{et al.} \cite{garbac}. The result stated as part of (\cite{garbac}, Theorem 2.2) with respect to the characterization of the Green's $\mathcal{L}$ relation  is incomplete, because the elements    $\alpha=\left( \begin{array}{ccccc}
                            1 & \{2,3\} & \{4,6\} & 5 \\
                            {1} & {2}  & 3 & 4
                           \end{array}
\right)$ and\\ $\beta=\left( \begin{array}{ccccc}
                            {1} & 2 & \{3,4, 6\} & 5 \\
                            {4} & {3}  & 2 & 1
                           \end{array}
\right)\in \mathcal{CT}_{6}$ have no convex transversals and therefore (\cite{garbac}, Theorem 2.2) does not say whether the two elements are $\mathcal{L}$ related or not. However, they are indeed $\mathcal{L}$ related as we shall see from Corollary \eqref{tt}.

A partition $\textbf{Ker}~\gamma$ (for $\gamma \in \mathcal{P}_{n}$) is said to be a \emph{refinement} of the partition $\textbf{Ker}~\alpha$ if $\ker~\gamma\subseteq \ker~\alpha$. Thus, if $\textbf{Ker}~\gamma=\{A_{1}^{'}, A_{2}^{'}, \ldots, A_{s}^{'}\}$ and $\textbf{Ker}~\alpha=\{A_{1}, A_{2}, \ldots, A_{p}\}$ then for any $1\leq i\leq p$, $A_{i}={\overset{}{\underset{j}\cup}}A^{'}_{j}$ for some $1\leq j\leq s$ and thus $p\leq s.$
A refined partition $\textbf{Ker}~\gamma$ of $\textbf{Ker}~\alpha$ is said to be\emph{ maximum}, if  $\ker~\gamma\subseteq \ker~\alpha$ and every refined relation of $\ker~\alpha$ say $\ker~\theta$ is contained in $\ker~\gamma$. Moreover, if there are at least two maximal relations say $\ker~\tau_{i}$ ($i\geq 2$)  contained in $\ker~\alpha$, then $\ker~\gamma$ is maximum if $\ker~\gamma={\overset{}{\underset{i\geq 2}\cap}}\ker~\tau_{i}.$  A refined partition $\textbf{Ker}~\gamma$ of $\textbf{Ker}~\alpha$ is admissible if it has an admissible transversal. The following lemma is from \cite{mmz}.

\begin{lemma}[\cite{mmz}, Lemma 3.1]\label{bkk} For every $\alpha\in \mathcal{CP}_{n}$,  $\textbf{Ker}~\alpha$   has   a maximum finer partition say $\textbf{Ker}~\gamma$ (for some $\gamma\in \mathcal{CP}_{n}$) with an admissible transversal.

\end{lemma}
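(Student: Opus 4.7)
The argument will parallel the proof of Lemma \ref{bb}, with convexity of the transversal relaxed to admissibility. Let $\mathcal{F}$ denote the family of all refinements $\textbf{Ker}~\gamma$ of $\textbf{Ker}~\alpha$ (with $\gamma\in\mathcal{CP}_n$) that possess an admissible transversal. By Lemma \ref{bb} together with the fact (noted after the definition of admissibility) that every convex transversal is admissible, $\mathcal{F}$ contains the maximum convex-transversal refinement and so is non-empty; finiteness of $[n]$ makes $\mathcal{F}$ finite.

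The core step is to show that $\mathcal{F}$ is closed under the join in the partition lattice. Suppose $\textbf{Ker}~\gamma_1, \textbf{Ker}~\gamma_2 \in \mathcal{F}$ have admissible transversals $T_1$ and $T_2$. Every block $A^*$ of the join $\textbf{Ker}~(\gamma_1\vee \gamma_2)$ is simultaneously a union of $\gamma_1$-blocks and a union of $\gamma_2$-blocks, so $A^*\cap T_i \neq \nullset$ for $i=1,2$. I would construct an admissible transversal for the join by selecting, in each $A^*$, a single element of $T_1\cup T_2$, then verify the contraction inequality $|t-t'|\leq d(A^*,B^*)$ by transporting the admissibility of $T_i$ through a chain of sub-blocks realizing the minimum distance: any closest pair $(a,b)\in A^*\times B^*$ realizing $d(A^*,B^*)$ lies in $\gamma_i$-sub-blocks $A\subseteq A^*$ and $B\subseteq B^*$ satisfying $d(A,B)=d(A^*,B^*)$, which forces $|t_A-t_B|\leq d(A^*,B^*)$ for the corresponding $T_i$-representatives.

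Once join-closure is in hand, the refined partition $\bigvee\{\textbf{Ker}~\gamma : \textbf{Ker}~\gamma \in \mathcal{F}\}$, produced by iterating finitely many binary joins, itself lies in $\mathcal{F}$ and contains every other element of $\mathcal{F}$ as a sub-relation, yielding the desired maximum finer partition of $\textbf{Ker}~\alpha$ carrying an admissible transversal.

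The principal obstacle will be the join-closure step, since a single transversal element chosen inside each join-block $A^*$ must satisfy the contraction inequality against \emph{every} other join-block $B^*$, whereas the $\gamma_i$-sub-block $A\subseteq A^*$ realizing $d(A,B)=d(A^*,B^*)$ generally depends on $B^*$. I expect to resolve this by alternating between $T_1$ and $T_2$: fixing one admissible transversal element inside $A^*$ cannot leave an admissibility constraint violated against some $B^*$ without producing, within the same $A^*$, a second $\gamma_i$-block that would in turn contradict the admissibility of $T_i$ itself, and this obstruction forces a uniform canonical choice of representative inside each join-block.
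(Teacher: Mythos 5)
The paper does not actually prove this statement --- it is imported verbatim from \cite{mmz} (Lemma/Theorem 3.1 there), so there is no in-paper argument to measure you against; your proposal has to stand on its own. As it stands it does not: the entire content of the lemma is concentrated in the join-closure step, and that is exactly the step you leave unproven. What you do establish is only a \emph{pairwise} estimate: for each pair of join-blocks $A^{*},B^{*}$ there exist $P_1$-sub-blocks $A\subseteq A^{*}$, $B\subseteq B^{*}$ with $d(A,B)=d(A^{*},B^{*})$, whence $|t_A-t_B|\leq d(A^{*},B^{*})$ for \emph{those} representatives. But an admissible transversal of the join requires a single element of each $A^{*}$ satisfying the inequality against every $B^{*}$ simultaneously, and, as you yourself note, the witnessing sub-block $A$ varies with $B^{*}$. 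The closing paragraph (``alternating between $T_1$ and $T_2$ \ldots forces a uniform canonical choice'') is a statement of hope, not an argument: it does not say which element is chosen, nor why a violated constraint against some $B^{*}$ produces the claimed contradiction with admissibility of $T_i$. Until that selection argument is supplied, the lemma is not proved; indeed you have not even ruled out that the family $\mathcal{F}$ fails to be join-closed, in which case the whole strategy collapses (note that there are partitions with no admissible transversal all of whose proper refinements must be examined carefully, e.g.\ $\{\{1\},\{2,3,4\},\{5\}\}$, so admissibility is genuinely fragile under coarsening).

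Two further remarks. First, the surrounding scaffolding is fine but partly superfluous: non-emptiness of $\mathcal{F}$ does not need Lemma~\ref{bb}, since the partition of $\dom\alpha$ into singletons always carries the (trivially admissible) transversal $\dom\alpha$ and is the kernel of the identity contraction on $\dom\alpha$; and once an admissible transversal $T$ of the join is produced, the map onto $T$ is itself a contraction realizing the join as $\textbf{Ker}~\gamma$, so realizability is automatic --- worth saying explicitly. Second, a route more likely to close the gap is to replace the combinatorial selection by semigroup-theoretic bookkeeping: each admissible transversal $T_i$ of $P_i$ gives an idempotent contraction $\epsilon_i$ with $\ker\epsilon_i=P_i$ and image $T_i$, and one can try to realize the join as the kernel of a suitable (stabilized) product of the $\epsilon_i$, in the spirit of Lemma~\ref{idp2}; alternatively, give an explicit construction of the maximum admissible refinement directly from the blocks of $\textbf{Ker}~\alpha$. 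Either way, the step you flag as ``the principal obstacle'' is the theorem, and it is missing.
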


Notice that if $\gamma\in \mathcal{CT}_{n}$ then from Lemma~\eqref{tc}, we see that $\im~\gamma$ is convex i. e., $A_{\gamma}\gamma=\{\tau_{1}, \ldots,\tau_{s}\}\gamma$ of $\textbf{Ker}~\gamma$ is convex. In view of this we give  the following definition. A transformation $\alpha\in\mathcal{CT}_{n}$, is said to have a \emph{refined convex} partition $\textbf{Ker}~\gamma$ if $\gamma$ is a refinement of $\alpha$ and $\textbf{Ker}~\gamma$ has a convex transversal. Thus we have as a corollary to Lemma~\eqref{bkk}, the following:

\begin{corollary}\label{bb2} For every  $\alpha\in \mathcal{CT}_{n}$, the partition $\textbf{Ker}~\alpha$   has  a maximum finer partition say $\textbf{Ker}~\gamma$ (for some $\gamma\in \mathcal{CT}_{n}$) with a convex transversal.
\end{corollary}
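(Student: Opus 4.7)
My plan is to deduce this corollary from Lemma~\ref{bkk} by exploiting the hypothesis $\alpha\in\mathcal{CT}_{n}$ in two places: first to force the witness $\gamma$ supplied by the lemma to lie in $\mathcal{CT}_{n}$, and second to upgrade its admissible transversal to a convex one via a short triangle-inequality argument.

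First, since $\alpha\in\mathcal{CT}_{n}\subseteq\mathcal{CP}_{n}$, Lemma~\ref{bkk} produces some $\gamma\in\mathcal{CP}_{n}$ whose kernel $\textbf{Ker}~\gamma=\{A'_{1},\ldots,A'_{s}\}$ is a maximum finer partition of $\textbf{Ker}~\alpha$ carrying an admissible transversal $T_{\gamma}=\{\tau_{1},\ldots,\tau_{s}\}$. Because $\textbf{Ker}~\gamma$ refines $\textbf{Ker}~\alpha$, every block of $\textbf{Ker}~\alpha$ is a disjoint union of blocks of $\textbf{Ker}~\gamma$, so $\dom~\gamma=\bigcup_{j}A'_{j}=\dom~\alpha=[n]$; hence $\gamma\in\mathcal{CT}_{n}$.

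For the convexity of $T_{\gamma}$ I will argue by contradiction. Order $T_{\gamma}$ as $\tau_{1}<\tau_{2}<\cdots<\tau_{s}$ and suppose some integer $z\in[n]\setminus T_{\gamma}$ lies strictly between consecutive transversal elements, say $\tau_{i}<z<\tau_{i+1}$. Since $\dom~\gamma=[n]$, $z$ belongs to some block $A'_{k}$ with $\tau_{k}\neq z$. Admissibility of $T_{\gamma}$, applied to the pairs $(z,\tau_{i})\in A'_{k}\times A'_{i}$ and $(z,\tau_{i+1})\in A'_{k}\times A'_{i+1}$, yields $|\tau_{k}-\tau_{i}|\leq z-\tau_{i}$ and $|\tau_{k}-\tau_{i+1}|\leq\tau_{i+1}-z$; summing gives $|\tau_{k}-\tau_{i}|+|\tau_{k}-\tau_{i+1}|\leq\tau_{i+1}-\tau_{i}$, which together with the triangle inequality forces $\tau_{i}\leq\tau_{k}\leq\tau_{i+1}$. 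Since distinct blocks have distinct representatives, $\tau_{k}\notin\{\tau_{i},\tau_{i+1}\}$, so $\tau_{k}$ lies strictly between $\tau_{i}$ and $\tau_{i+1}$, contradicting their consecutiveness in $T_{\gamma}$. Therefore no such $z$ exists and $T_{\gamma}$ is convex.

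I expect the last step to be the main obstacle: one must apply the admissibility condition pointwise at the single pair $(z,\tau_{i})$—rather than only at pairs of pre-chosen representatives—and then combine it with the triangle inequality to squeeze $\tau_{k}$ into the open interval $(\tau_{i},\tau_{i+1})$. The full-domain hypothesis $\alpha\in\mathcal{CT}_{n}$ enters essentially at the point where I claim the candidate ``gap'' element $z\in[n]$ must belong to some block of $\textbf{Ker}~\gamma$; Lemma~\ref{tc} is not directly required for this route, though it is consistent with the convex image of the resulting $\gamma$.
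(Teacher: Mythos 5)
Your argument is correct in substance, and it reaches the corollary by a more self-contained route than the paper does. The paper's own justification is essentially one line: once $\dom~\gamma=[n]$, the block-to-representative map $\delta\colon A'_{j}\mapsto\tau_{j}$ attached to the admissible transversal is itself a full contraction, so Lemma~\ref{tc} says its image, which is exactly $T_{\gamma}$, is convex (this is also why the paper can later remark that for full contractions ``admissible'' and ``convex'' transversals coincide). Your triangle-inequality computation re-proves precisely this special case of Lemma~\ref{tc} from first principles; that is legitimate and arguably more transparent, and your first paragraph (full domain passes to the refinement, hence $\gamma\in\mathcal{CT}_{n}$) is the same observation the paper makes implicitly. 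One small patch is needed in your second paragraph: the assertion that $\tau_{k}\notin\{\tau_{i},\tau_{i+1}\}$ ``since distinct blocks have distinct representatives'' presupposes $k\neq i$ and $k\neq i+1$, i.e.\ that $z$ does not lie in the block of $\tau_{i}$ or of $\tau_{i+1}$, which you have not ruled out. It is true, and it follows from the two inequalities you already wrote down: if $k=i$ then $|\tau_{i}-\tau_{i+1}|\leq\tau_{i+1}-z$ forces $z\leq\tau_{i}$, contradicting $\tau_{i}<z$, and symmetrically $k=i+1$ forces $\tau_{i+1}\leq z$. With that one line added, your proof is complete.
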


 We now deduce the characterization of Green's equivalences on $\mathcal{CT}_{n}$ from Theorem~\eqref{14}.
\begin{corollary}\label{tt} Let $\alpha,\beta\in \mathcal{CT}_{n}$  be as expressed in \eqref{2}. Then
 \begin{itemize}
  \item[(i)]  $(\alpha, \beta)\in \mathcal{L}$ if and only if  $\textbf{Ker}~\alpha$ and $\textbf{Ker}~\beta$ have convex finer partitions,  $\textbf{Ker}~\gamma_{1}$ and $\textbf{Ker}~\gamma_{2}$ (for some $\gamma_{1}$ and $\gamma_{2}$ in $\mathcal{CT}_{n}$), respectively, such that there exists either a translation $\tau_{i}\mapsto \sigma_{i}$ and $\tau_{i}\alpha= \sigma_{i}\beta$ or a reflection $\tau_{i}\mapsto \sigma_{s-i+1}$ and $\tau_{i}\alpha= \sigma_{s-i+1}\beta$ for all $i=1,\ldots,s$ ($s\geq p$), where $A_{\alpha}=\{\tau_{1}, \ldots,\tau_{s}\}$ and $B_{\alpha}=\{\sigma_{1}, \ldots,\sigma_{s}\}$, are  the convex transversals of $\textbf{Ker}~\gamma_{1}$ and $\textbf{Ker}~\gamma_{2}$, respectively;
  \item[(ii)] $(\alpha,~\beta)\in \mathcal{R}$ if and only  if $ker~\alpha=ker~\beta$;
  \item[(iii)] $(\alpha,~\beta)\in \mathcal{D}$ if and only   if there exist isometries $\vartheta_{1}$ and $\vartheta_{2}$ from $\textbf{Ker}~\gamma_{1}$ to $\textbf{Ker}~\gamma_{2}$ and  from $\im~\alpha$ to $\im~\beta$, respectively.
\end{itemize}

 \end{corollary}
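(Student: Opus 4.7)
The plan is to deduce each of (i)--(iii) from Theorem \ref{14} by specialising to $\mathcal{CT}_n \subseteq \mathcal{CP}_n$ and exploiting two $\mathcal{CT}_n$-specific inputs: Lemma \ref{tc} (every image in $\mathcal{CT}_n$ is convex) and Corollary \ref{bb2} (every $\alpha \in \mathcal{CT}_n$ admits a maximum finer partition whose transversal is convex, not merely admissible).

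For parts (i) and (iii) I would take the admissible finer partitions $\textbf{Ker}~\gamma_1, \textbf{Ker}~\gamma_2$ furnished by Theorem \ref{14}(i) and (iii) and upgrade them to convex finer partitions via Corollary \ref{bb2}; the translation/reflection clause on the transversals in (i), and the pair of isometries (between transversals and between images) in (iii), then transfer without change. The converse implications are immediate because every convex transversal is in particular admissible, so that Theorem \ref{14} applies directly once convex refinements are fixed.

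The essential new content is in (ii), where the translation/reflection clause of Theorem \ref{14}(ii) should disappear. The forward direction is immediate. For the converse, given $\alpha, \beta \in \mathcal{CT}_n$ with $ker~\alpha = ker~\beta$, Lemma \ref{tc} says $\im~\alpha$ and $\im~\beta$ are convex subsets of $[n]$ of the common cardinality $p$, and any two such sets differ by either a translation or a reflection. The step I expect to be the main obstacle is verifying that this set-level isometry actually respects the block-indexing demanded by Theorem \ref{14}(ii): using the contraction inequalities $|x_i - x_j|, |y_i - y_j| \leq \min_{a\in A_i, b\in A_j}|a-b|$ together with convexity of both images (which pins down the multiset of pairwise distances), I would show that the bijection $x_i \mapsto y_i$ induced by the matched blocks is itself an isometry, hence a translation or a reflection. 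Equivalently, and perhaps more cleanly, I would construct $\gamma \in \mathcal{CT}_n$ with $\alpha\gamma = \beta$ (and a symmetric $\gamma'$ for the other direction) by defining $\gamma$ on $\im~\alpha$ via the set-level affine isometry onto $\im~\beta$ and then extending to the full chain $[n]$ as a contraction, which directly confirms $\alpha\,\mathcal{R}\,\beta$ and sidesteps the labeling issue.
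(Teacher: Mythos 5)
Your proposal is correct and follows essentially the same route as the paper, whose entire proof is the one-line observation that the result follows from Theorem~\ref{14} together with $\dom\alpha=[n]$ and the fact (via Lemma~\ref{tc}) that every admissible transversal in $\mathcal{CT}_{n}$ is automatically convex --- so no separate ``upgrading'' via Corollary~\ref{bb2} is even needed for (i) and (iii). Your extra work on (ii), showing that equality of kernels already forces the induced bijection $x_i\mapsto y_i$ between the two convex images to be a translation or reflection, is a detail the paper leaves implicit, and it does need the contraction inequalities on adjacent blocks (not just the equality of the multisets of pairwise distances) to go through.
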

\begin{proof}
The proof follows directly from Theorem~\eqref{14} coupled with the fact that $\dom\alpha=[n]$ and every admissible transversal is necessarily convex, by Lemma~\eqref{tc}.
\end{proof}

Next, let us exemplify the above theorem using the counter example we stated earlier in the beginning of this section.

\begin{example} Consider
 $\alpha=\left( \begin{array}{ccccc}
                            1 & \{2,3\} & \{4,6\} & 5 \\
                            {1} & {2}  & 3 & 4
                           \end{array}
\right)$ and $\beta=\left( \begin{array}{ccccc}
                            {1} & 2 & \{3,4, 6\} & 5 \\
                            {4} & {3}  & 2 & 1
                           \end{array}
\right)$ in $\mathcal{CT}_{6}$. Notice that the partition $\textbf{Ker}~\gamma=\{\{1\}, \{2\},\{3\}, \{4,6\}, \{5\}\}$ (for some $\gamma$) is a refined partition of both the partitions $\textbf{Ker}~\alpha$ and $\textbf{Ker}~\beta$. Moreover, it's admissible since it has a convex transversal $\{1,2,3,4,5\}$. Now define a map from $\textbf{Ker}~\gamma$ to $\{1,2,3,4,5\}$ by $\delta=\left( \begin{array}{cccccc}
                            1 & 2 &  3 & \{4,6\} &5 \\
                            5 & 4  & 3 &2 & 1
                           \end{array}
\right)$. Then clearly $\delta$ is a contraction and it is easy see that $\alpha=\delta\beta$ and $\beta=\delta\alpha,$ as required.

\end{example}

\begin{example}
Consider
 $\alpha=\left( \begin{array}{cccc}
                            \{1,2\} & \{3,4\} & \{5,6\}  \\
                            4 & 3  & 2
                           \end{array}
\right)$ and $\beta=\left( \begin{array}{cccc}
                            \{1,2, 3\} & \{4, 5\}& 6  \\
                            {4} & {3}  & 2
                           \end{array}
\right)$  in $\mathcal{CT}_{6}$. Notice that the partitions $\textbf{Ker}~\gamma_{1}=\{\{1,2,3\}, \{4\},\{5\}, \{6\}\}$ and $\textbf{Ker}~\gamma_{2}=\{\{1,2\}, \{3\},\{4\}, \{5,6\}\}$ (for some $\gamma_{1}$, $\gamma_{2}$ in $\mathcal{CT}_{6}$) are refined partitions of the partitions $\textbf{Ker}~\alpha$ and $\textbf{Ker}~\beta$, respectively. Moreover, they are admissible since each  has a convex transversal. The maps  defined by $\gamma_{1}=\left( \begin{array}{ccccc}
                            \{1,2,3,\}& 4 &  5 & 6 \\
                            2  & 3 &4 & 5
                           \end{array}
\right)$ and $\gamma_{2}=\left( \begin{array}{ccccc}
                            \{1,2\}& 3 &  4 & \{5,6\} \\
                            2  & 3 &4 & 5
                           \end{array}
\right)$ are contractions in $\mathcal{CT}_{6}$  and it is easy check that $\alpha=\gamma_{1}\beta$ and $\beta=\gamma_{2}\alpha.$

\end{example}

 Now, let us deduce a characterization of  regular elements in $\mathcal{CT}_{n}$ from Lemma~\eqref{bb}.

\begin{corollary}\label{regt} Let $\alpha \in \mathcal{CT}_{n}$.   Then $\alpha$ is regular if and only if $\textbf{Ker}~\alpha$ has a convex transversal.
\end{corollary}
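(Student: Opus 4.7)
The plan is to derive this from Theorem~\ref{reeg} applied to $\mathcal{CT}_n \subseteq \mathcal{CP}_n$, using Lemma~\ref{tc} to upgrade ``admissible transversal with the isometry property'' to ``convex transversal''. Recall that Theorem~\ref{reeg} characterises regularity in $\mathcal{CP}_n$ by the existence of an admissible transversal $T_\alpha = \{t_1,\ldots,t_p\}$ for which the map $t_i \mapsto x_i = t_i\alpha$ is an isometry. Since $\alpha \in \mathcal{CT}_n$ has $\dom\,\alpha = [n]$ and, by Lemma~\ref{tc}, $\im\,\alpha$ is a convex (hence consecutive) subset of $[n]$ of size $p$, the two directions essentially reduce to a short combinatorial observation about isometries between equal-sized subsets of $[n]$, one of which is convex.

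For the easier ($\Leftarrow$) direction, suppose $\textbf{Ker}\,\alpha$ has a convex transversal $T_\alpha = \{t_1 < t_2 < \cdots < t_p\}$ with $t_i \in A_i$. Then $T_\alpha$ consists of $p$ consecutive integers, and by the remark following the definition, a convex transversal is automatically admissible. I would then note that the restriction $t_i \mapsto x_i$ is a contraction (as a restriction of the contraction $\alpha$) from a set of $p$ consecutive integers onto $\im\,\alpha$, which is also a set of $p$ consecutive integers by Lemma~\ref{tc}. A bijective contraction between two such sets must in fact be an isometry, so Theorem~\ref{reeg} yields that $\alpha$ is regular.

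For the ($\Rightarrow$) direction, Theorem~\ref{reeg} produces an admissible transversal $T_\alpha = \{t_1 < \cdots < t_p\}$ such that $t_i \mapsto x_i$ is an isometry. Here the main point is to argue that this forces $T_\alpha$ itself to be convex. Since isometries preserve the diameter, we get $t_p - t_1 = |x_{i_{\max}} - x_{i_{\min}}|$ where $x_{i_{\max}}$ and $x_{i_{\min}}$ are the extremes of $\im\,\alpha$; because $\im\,\alpha$ is convex of size $p$ (Lemma~\ref{tc}), this diameter equals $p - 1$. Hence $t_p - t_1 = p - 1$, which forces the $p$ integers in $T_\alpha$ to be consecutive, i.e., $T_\alpha$ is convex.

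The only nontrivial step is the last one: deducing convexity of the transversal from the isometry property together with convexity of the image. Everything else is essentially unwrapping Theorem~\ref{reeg} and Lemma~\ref{tc} in the special case $\dom\,\alpha = [n]$. No new constructions of transversals are required — the same $T_\alpha$ supplied by Theorem~\ref{reeg} is shown to already be convex, and conversely any convex transversal is shown to satisfy the hypotheses of Theorem~\ref{reeg}.
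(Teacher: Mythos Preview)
Your proposal is correct and follows essentially the same approach as the paper: both directions hinge on Theorem~\ref{reeg} together with Lemma~\ref{tc}, and the same transversal supplied by Theorem~\ref{reeg} is shown to be convex (and vice versa). If anything, your version is slightly more explicit than the paper's, since you spell out the diameter argument for $(\Rightarrow)$ and the ``bijective contraction between two intervals of equal size is an isometry'' argument for $(\Leftarrow)$, whereas the paper simply asserts these implications.
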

\begin{proof} Let $\alpha\in \mathcal{CT}_{n}$. (Notice that $\mathcal{CT}_{n}$ is a subsemigroup of $\mathcal{CP}_{n}$, thus $\alpha\in \mathcal{CP}_{n}.$)
Suppose also that $\alpha$ is regular, then by Theorem~\eqref{reeg}, there exists an admissible transversal $T_{\alpha}=\{t_{1},\ldots, t_{p}\}$ such that, $t_{i}\mapsto x_{i}$ ($i=1,2,\ldots,p$) is an isometry. Since $\alpha\in \mathcal{CT}_{n}$ then by Lemma~\eqref{tc} $\im~\alpha$ is convex and the fact that the map $t_{i}\mapsto x_{i}$ or $t_{i}\mapsto x_{p-i+1}$ ($i=1,2,\ldots,p$) is an isometry implies $T_{\alpha}$ is convex.

Conversely, suppose $T_{\alpha}=\{t_{1},\ldots, t_{p}\}$ is convex. Then by Lemma~\eqref{tc}, $T_{\alpha}\alpha$ is convex. This means $\im~\alpha$ is convex. And the map $t_{i}\mapsto x_{i}$ ($i=1,2,\ldots,p$) is an isometry. Thus, by Theorem~\eqref{reeg} $\alpha$ is regular.

\end{proof}
In view of the above Corollary and Corollary~\eqref{tt} we give as remarks the following:
\begin{remark}
 \begin{itemize}
\item[(i)] Notice that if $\textbf{Ker}~\alpha$ and $\textbf{Ker}~\beta$ both have convex transversals, then both $\alpha$ and $\beta$ are regular and so the result in [\cite{garbac}, Theorem 3.1, (i), (iii)] holds only for regular elements in $\mathcal{CT}_{n}$.

 \item[(ii)] A transversal $T_{\alpha}$ of $\alpha\in \mathcal{CT}_{n}$ is admissible if and only if it is convex.

 \end{itemize}
\end{remark}

\begin{corollary}\label{Regt} Let $\alpha \in\mathcal{ORCT}_{n}$.  Then $\alpha$ is regular if and only if $\min A_{p}-x_{p}=\max A_{1}-x_{1}=d$ and $A_{i}=\{x_{i}+d\}$ or $\min A_{p}-x_{1}=\max A_{1}-x_{p}=d$ and $A_{i}=\{x_{p-i+1}+d\}$, for $i=2,\ldots,p-1$.
\end{corollary}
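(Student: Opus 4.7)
My strategy is to combine Corollary~\eqref{regt}, which reduces regularity of $\alpha\in\mathcal{CT}_n$ to the existence of a convex transversal of $\textbf{Ker}~\alpha$, with Corollary~\eqref{d1}, which forces $|A_k|=1$ for every interior index $k\in\{2,\ldots,p-1\}$. Since $\alpha\in\mathcal{ORCT}_n$, the blocks satisfy $A_1<A_2<\cdots<A_p$ and the sequence $(x_1,\ldots,x_p)$ is either strictly increasing (order preserving case) or strictly decreasing (order reversing case); by Lemma~\eqref{tc}, the image $\im~\alpha$ is in either case a run of $p$ consecutive integers. I then plan to extract the constant $d$ from the isometry criterion $t_i\mapsto x_i$ supplied by Theorem~\eqref{reeg} and read off the two claimed formulas.

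In the order preserving case, write the unique element of $A_k$ as $a_k$ for $2\leq k\leq p-1$. Any convex transversal $T_\alpha=\{t_1<\cdots<t_p\}$ must satisfy $t_k=a_k$ at the interior positions, $t_1=\max A_1$, $t_p=\min A_p$, and $t_{i+1}=t_i+1$ throughout; combined with the isometry requirement $|t_i-t_j|=|x_i-x_j|$, this forces $t_i-x_i$ to be a common constant $d$. Specialising to $i\in\{1,p\}$ yields $\max A_1-x_1=\min A_p-x_p=d$, while the interior indices give $A_i=\{x_i+d\}$, which is exactly the first alternative of the statement.

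In the order reversing case the same set-up produces $t_{i+1}=t_i+1$, but now $x_i$ strictly decreases, so the isometry condition instead forces the sum $t_i+x_i$ to be constant. Since $\im~\alpha$ is a convex set of $p$ integers with $x_1>\cdots>x_p$, the sum $x_i+x_{p-i+1}$ is also a fixed constant; subtracting the two identities shows that $t_i-x_{p-i+1}$ is a common constant $d$. Reading off $i\in\{1,p\}$ and $2\leq i\leq p-1$ then yields $\max A_1-x_p=\min A_p-x_1=d$ together with $A_i=\{x_{p-i+1}+d\}$, the second alternative. For the converse, given either set of data one writes down the candidate transversal consisting of $p$ consecutive integers built from the $x_i$'s shifted by $d$, checks directly that it lies in the appropriate blocks with $t_i\mapsto x_i$ an isometry, and invokes Corollary~\eqref{regt}. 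The main obstacle I anticipate is the index bookkeeping in the reflection case, specifically isolating how the isometry-constant $c$ with $t_i+x_i=c$ and the reflection-constant $c'$ with $x_i+x_{p-i+1}=c'$ combine to produce the offset $d$ appearing in $A_i=\{x_{p-i+1}+d\}$.
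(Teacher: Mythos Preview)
Your proposal is correct. The paper itself gives no argument here beyond the single line ``The proof is the same as that of Corollary~4.2 in \cite{mmz}'', so you are in effect writing out what the authors delegate to that reference. Your route---using Corollary~\eqref{regt} to reduce to the existence of a convex transversal, invoking Corollary~\eqref{d1} to force the interior blocks to be singletons, observing that the resulting transversal is the run $\{\max A_1,\max A_1+1,\ldots,\min A_p\}$, and then reading off the offset $d$ from the translation or reflection between this run and the convex image $\im\alpha$---is exactly the natural specialisation of the \cite{mmz} argument to the full case, and your bookkeeping in the reflection branch (matching the constants $t_i+x_i$ and $x_i+x_{p-i+1}$ to isolate $d=t_i-x_{p-i+1}$) is sound. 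One small remark: in the forward direction you do not really need to appeal to Theorem~\eqref{reeg} separately, since once you know the transversal and the image are both runs of $p$ consecutive integers the isometry is automatic; but invoking it does no harm.
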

\begin{proof} The proof is the same as that of Corollary 4.2 in \cite{mmz}.
\end{proof}

\begin{corollary} Let $\alpha\in\mathcal{OCT}_{n}$.  Then $\alpha$ is regular if and only if $\min A_{p}-x_{p}=\max A_{1}-x_{1}=d$ and $A_{i}=\{x_{i}+d\}$, for $i=2,\ldots,p-1$.
\end{corollary}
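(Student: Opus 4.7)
The plan is to deduce this corollary from the immediately preceding Corollary~\ref{Regt}, which characterizes regularity in the larger semigroup $\mathcal{ORCT}_{n}$. Since $\mathcal{OCT}_{n}$ is a subsemigroup of $\mathcal{ORCT}_{n}$, any $\alpha\in\mathcal{OCT}_{n}$ satisfies the hypotheses of Corollary~\ref{Regt}, and regularity there is equivalent to one of two disjunctive sets of conditions: the \emph{translation} alternative $\min A_{p}-x_{p}=\max A_{1}-x_{1}=d$ with $A_{i}=\{x_{i}+d\}$ for $i=2,\ldots,p-1$, which is precisely the conclusion we want, or the \emph{reflection} alternative $\min A_{p}-x_{1}=\max A_{1}-x_{p}=d$ with $A_{i}=\{x_{p-i+1}+d\}$ for $i=2,\ldots,p-1$. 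The entire content of the corollary is therefore to rule out the reflection alternative in the order preserving setting.

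To do this I will first fix notation by labelling the kernel classes so that $A_{1}<A_{2}<\ldots<A_{p}$ in the natural order on $[n]$. Because $\alpha\in\mathcal{OCT}_{n}$ is order preserving, this ordering of the blocks forces $x_{1}<x_{2}<\ldots<x_{p}$ on the image side. Now I will suppose, for contradiction, that the reflection alternative of Corollary~\ref{Regt} holds. The endpoint conditions then read $\max A_{1}=x_{p}+d$ and $\min A_{p}=x_{1}+d$. But $A_{1}<A_{p}$ implies $\max A_{1}<\min A_{p}$, so we would get $x_{p}+d<x_{1}+d$, i.e.\ $x_{p}<x_{1}$, contradicting $x_{1}<x_{p}$ as soon as $p\geq 2$. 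Hence only the translation alternative can occur, giving the stated necessity. The case $p=1$ is trivial, as both alternatives degenerate to the same condition. Conversely, the conditions in the statement are literally the translation alternative of Corollary~\ref{Regt}, so they are sufficient for regularity.

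The main obstacle I anticipate is simply the bookkeeping around degenerate values of $p$: one must verify that the endpoint equations (not involving any middle block) already force the contradiction when $p=2$, where there is no middle block $A_{i}$ and one might worry the reflection alternative is vacuously satisfied. The argument above handles this uniformly because it relies only on the endpoint equations together with $\max A_{1}<\min A_{p}$. No further calculation should be needed, and the whole proof reduces to one or two lines citing Corollary~\ref{Regt} and exhibiting the order preserving contradiction.
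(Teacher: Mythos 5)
Your proposal is correct and follows the same route as the paper, which simply states that the result follows from the preceding corollary for $\mathcal{ORCT}_{n}$ without giving any detail. Your explicit argument ruling out the reflection alternative (order preservation forces $x_{1}<x_{p}$ for $p\geq 2$, while the reflection endpoint equations together with $\max A_{1}<\min A_{p}$ would force $x_{p}<x_{1}$) is exactly the step the paper leaves implicit, and it is sound.
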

\begin{proof} This follows from Corollary~\eqref{Regt}.
\end{proof}

\begin{remark} Product of idempotents in $\mathcal{CT}_{n}$ is not necessarily an  idempotent. For example consider $\alpha=\left(\begin{array}{cc}
                                       1 & \{2,3,4\}   \\
                                       1 & 2
                                     \end{array}
 \right)$ and $\beta=\left(\begin{array}{cc}
                                       \{1,3\} & \{2,4\}   \\
                                       3 & 2
                                     \end{array}
 \right)\in \mathcal{CT}_{3}$. Then $\alpha\beta=\left(\begin{array}{cc}
                                       1 & \{2,3,4\}   \\
                                       3 & 2
                                     \end{array}
 \right)$ is not an idempotent.
\end{remark}

 A semigroup $S$ is said to be \emph{left abundant} (resp., \emph{right abundant}) if each $\mathcal{{L}}^{*}-class$ (resp., $\mathcal{{R}}^{*}-class$) contains an idempotent and  it is said to be \emph{abundant} if $\mathcal{{L}}^{*}-class$ and $\mathcal{{R}}^{*}-class$ of $S$ both contains an idempotent \cite{FOUN}. That is to say that the semigroup has plentiful supply of idempotents. Examples of abundant semigroups are the regular semigroups. The study of abundant semigroups was initiated in 1979 by John Fountain \cite{FOUN,FOUN2}. Since then a lot of work has been done by many authors on the study of abundant semigroups. The starred Green's relations for the semigroups $\mathcal{CT}_{n}$ and $\mathcal{OCT}_{n}$ were characterized by Garba \emph{et al.} \cite{garbac} and curiously they did not show whether they are abundant or not.  We are now going to show that the semigroup $\mathcal{CT}_{n}$ and its subsemigroups $\mathcal{ORCT}_{n}$ and $\mathcal{OCT}_{n}$ are left abundant but not right abundant, in general.

The following theorem gives characterizations of starred Green's relations from \cite{garbac}.
\begin{theorem}[\cite{garbac}, Theorem 4.1]\label{starr} Let $\alpha,\beta$ be elements in $S\in \{\mathcal{CT}_{n}, \mathcal{OCT}_{n} \}$. Then we have the following:
\begin{itemize}
\item[(i)] $(\alpha,\beta)\in \mathcal{L}^{*}$ if and only if $\im~\alpha=\im~\beta$;
\item[(ii)] $(\alpha,\beta)\in \mathcal{R}^{*}$ if and only if $ker~\alpha=ker~\beta$;
\item[(iii)] $(\alpha,\beta)\in \mathcal{H}^{*}$ if and only if $\im~\alpha=\im~\beta$ and $ker~\alpha=ker~\beta$;
\item[(iv)] $(\alpha,\beta)\in \mathcal{D}^{*}$ if and only if $|\im~\alpha|=|\im~\beta|$.
\end{itemize}

\end{theorem}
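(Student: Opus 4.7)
The plan is to use the two-sided characterisations $\alpha\mathrel{\mathcal{L}^{*}}\beta$ iff $(\forall\,\gamma,\delta\in S^{1})\ \alpha\gamma=\alpha\delta\Leftrightarrow\beta\gamma=\beta\delta$, and dually $\alpha\mathrel{\mathcal{R}^{*}}\beta$ iff $(\forall\,\gamma,\delta\in S^{1})\ \gamma\alpha=\delta\alpha\Leftrightarrow\gamma\beta=\delta\beta$, combined with the convexity of images in $\mathcal{CT}_{n}$ (Lemma~\ref{tc}).

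I would start with clause (ii), which is the cleanest. Since $\gamma\alpha=\delta\alpha$ is equivalent to saying that $x\gamma$ and $x\delta$ lie in the same $\ker\alpha$-class for every $x\in[n]$, the sufficient direction is automatic once $\ker\alpha=\ker\beta$. For the converse, pick $(u,v)\in\ker\alpha\setminus\ker\beta$ (swapping $\alpha$ and $\beta$ if necessary) and take $\gamma,\delta$ to be the constant maps at $u$ and $v$ respectively; both lie in $\mathcal{OCT}_{n}\subseteq\mathcal{CT}_{n}$, and by construction $\gamma\alpha=\delta\alpha$ while $\gamma\beta\neq\delta\beta$.

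For clause (i), the dual equivalence $\alpha\gamma=\alpha\delta\Leftrightarrow\gamma|_{\im\alpha}=\delta|_{\im\alpha}$ gives the easy direction. Conversely, assume $\im\alpha\neq\im\beta$ and, after swapping, pick $c\in\im\alpha\setminus\im\beta$; by Lemma~\ref{tc} the set $\im\beta$ is convex, so either $c<b:=\min\im\beta$ or $c>b':=\max\im\beta$. In the first case set $\gamma=\mathrm{id}_{[n]}$ and $\delta(x):=\max(x,b)$; a three-case check ($x,y\geq b$; $x,y<b$; $x<b\leq y$) confirms that $\delta$ is an order-preserving contraction, hence in $\mathcal{OCT}_{n}\subseteq\mathcal{CT}_{n}$. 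Since $\delta$ fixes $\im\beta$ pointwise while sending $c$ to $b\neq c$, we obtain $\beta\gamma=\beta\delta$ but $\alpha\gamma\neq\alpha\delta$. The case $c>b'$ is symmetric, using $\delta(x):=\min(x,b')$, so the same construction covers both $\mathcal{CT}_{n}$ and $\mathcal{OCT}_{n}$.

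Clause (iii) is immediate from $\mathcal{H}^{*}=\mathcal{L}^{*}\cap\mathcal{R}^{*}$. For (iv), use $\mathcal{D}^{*}=\mathcal{L}^{*}\vee\mathcal{R}^{*}$: parts (i), (ii) show that both relations preserve $|\im(\cdot)|$, so $\alpha\mathrel{\mathcal{D}^{*}}\beta$ forces $|\im\alpha|=|\im\beta|$. Conversely, if $p:=|\im\alpha|=|\im\beta|$, write $\im\alpha=\{a,a+1,\ldots,a+p-1\}$ and $\im\beta=\{b,b+1,\ldots,b+p-1\}$ (Lemma~\ref{tc}) and define $\gamma$ by $x\gamma:=x\alpha+(b-a)$; translation preserves contraction (and order-preservation), so $\gamma\in\mathcal{CT}_{n}$ (and in $\mathcal{OCT}_{n}$ whenever $\alpha$ is), with $\ker\gamma=\ker\alpha$ and $\im\gamma=\im\beta$, giving $\alpha\mathrel{\mathcal{R}^{*}}\gamma\mathrel{\mathcal{L}^{*}}\beta$. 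The main obstacle is the clamp construction in (i); once the convexity of $\im\beta$ forces the shape of $\delta$, the rest of the theorem drops out by routine bookkeeping.
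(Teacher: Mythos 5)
The paper itself offers no proof of this theorem --- it is imported verbatim from Garba \emph{et al.}\ as a known result --- so there is nothing internal to compare against; I can only judge your argument on its own terms, and it is correct and self-contained. Your strategy is the standard one via the two-sided characterisations of $\mathcal{L}^{*}$ and $\mathcal{R}^{*}$, and every auxiliary map you construct (constant maps, the clamps $x\mapsto\max(x,b)$ and $x\mapsto\min(x,b')$, the identity, and the translation $x\gamma=x\alpha+(b-a)$) is an order-preserving contraction, so the argument genuinely covers both $\mathcal{CT}_{n}$ and $\mathcal{OCT}_{n}$ as the statement requires. The two places that most often go wrong are handled properly: in (i) you use Lemma~\ref{tc} exactly where it is needed (convexity of $\im\beta$ forces the separating element $c$ to lie strictly outside the interval, which is what makes the clamp fix $\im\beta$ pointwise while moving $c$), and in (iv) you correctly treat $\mathcal{D}^{*}$ as the \emph{join} $\mathcal{L}^{*}\vee\mathcal{R}^{*}$ rather than a composition, arguing the forward direction by the invariant $|\im(\cdot)|$ (preserved by both generating relations, hence by their join) and the converse by exhibiting an explicit $\gamma$ with $\alpha\mathrel{\mathcal{R}^{*}}\gamma\mathrel{\mathcal{L}^{*}}\beta$; note the well-definedness of that $\gamma$ rests again on Lemma~\ref{tc}, since $x\alpha+(b-a)$ lands in $\{b,\ldots,b+p-1\}\subseteq[n]$ precisely because both images are intervals of the same length. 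The only cosmetic point is that since all these semigroups contain $\mathrm{id}_{[n]}$, the quantification over $S^{1}$ versus $S$ is harmless, which you implicitly rely on when taking $\gamma=\mathrm{id}_{[n]}$.
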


We now prove the following lemmas:

  \begin{lemma} Let $S\in\{\mathcal{CT}_{n}, \mathcal{OCT}_{n}, \mathcal{ORCT}_{n}\}$. Then $S$ is left abundant.
 \end{lemma}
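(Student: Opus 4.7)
The plan is to invoke Theorem~\ref{starr}(i), which states that $\alpha \mathcal{L}^{*} \beta$ if and only if $\im\,\alpha = \im\,\beta$. Hence to prove $S$ left abundant it suffices to exhibit, for each $\alpha \in S$, an idempotent $e \in S$ with $\im\,e = \im\,\alpha$; then $\alpha\,\mathcal{L}^{*}\,e$ and we are done.

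Given $\alpha \in S$, Lemma~\ref{tc} guarantees that $\im\,\alpha$ is convex, so I may write $\im\,\alpha = \{a, a+1, \ldots, b\}$ for some $1 \leq a \leq b \leq n$. The natural candidate is the \emph{clamping} (or retraction) map
$$xe = \begin{cases} a, & x \leq a, \\ x, & a \leq x \leq b, \\ b, & x \geq b. \end{cases}$$
Three routine verifications finish the job: (i) $\im\,e = \{a, \ldots, b\} = \im\,\alpha$ and every point of $\im\,e$ is fixed, so $e^{2} = e$; (ii) $e$ is order-preserving, so $e \in \mathcal{OCT}_{n} \subseteq \mathcal{ORCT}_{n} \subseteq \mathcal{CT}_{n}$ and in particular $e \in S$ in each of the three cases; (iii) $e$ is a contraction. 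The contraction check is the only computational step and reduces to a short case analysis on which of the three intervals $[1,a]$, $[a,b]$, $[b,n]$ contain $x$ and $y$ — in every case one obtains $|xe - ye| \leq |x-y|$, with equality precisely when both points already lie in $[a,b]$.

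The one mild subtlety I expect to be the main obstacle is that Theorem~\ref{starr} as quoted from \cite{garbac} is stated only for $\mathcal{CT}_{n}$ and $\mathcal{OCT}_{n}$, whereas the lemma also covers $\mathcal{ORCT}_{n}$. The remedy is to observe that the characterization ``$\alpha \mathcal{L}^{*} \beta \iff \im\,\alpha = \im\,\beta$'' extends verbatim to $S = \mathcal{ORCT}_{n}$: the sufficiency direction can use precisely the clamping idempotents just constructed (which lie in $\mathcal{ORCT}_{n}$ since they are order-preserving) as the elements $\gamma, \delta$ witnessing the defining right-congruence condition for $\mathcal{L}^{*}$, while the necessity direction is immediate from applying the right-congruence condition to suitable idempotents in $S$. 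Once this remark is in place, applying the (extended) Theorem~\ref{starr}(i) to $e$ and $\alpha$ yields $\alpha \mathcal{L}^{*} e$, and the lemma follows uniformly for all three choices of $S$.
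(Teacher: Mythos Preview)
Your proof is correct and follows essentially the same approach as the paper: both construct the very same idempotent (the paper's $\gamma$ with ordered stationary blocks $B_{i}\ni x_{i}$ is precisely your clamping map $e$, once one uses Lemma~\ref{tc} to see that the $x_{i}$ are consecutive) and then invoke Theorem~\ref{starr}(i). If anything your treatment is more careful, since you explicitly verify that $e$ is an order-preserving contraction and flag the $\mathcal{ORCT}_{n}$ gap in Theorem~\ref{starr}, both of which the paper passes over in silence.
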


 \begin{proof}
 Let $\alpha\in S$ and $L^{*}_{\alpha}$ be an $\mathcal{{L}}^{*}-class$ of $\alpha$ in $S$, where $\alpha=\left(\begin{array}{cccc}
                                                                            A_{1} & A_{2} & \ldots & A_{p} \\
                                                                            x_{1} & x_{2} & \ldots & x_{p}
                                                                          \end{array}
\right)$ ($1\leq p\leq n$). Define   $\gamma=\left(\begin{array}{cccc}
                                                                            B_{1} & B_{2} & \ldots & B_{p} \\
                                                                            x_{1} & x_{2} & \ldots & x_{p}
                                                                          \end{array}
\right)$, where $\textbf{Ker}~\gamma=\{B_{1}<\ldots<B_{p}\}$ and $x_{i}\in B_{i}$ ($1\leq p\leq n$). Then $\gamma\in S$ and since the blocks are stationary we have $\gamma^{2}=\gamma$. Moreover,  observe that $\im~\alpha=\im~\gamma$, therefore by Theorem~\eqref{starr}(i), $\alpha \mathcal{L}^{*}\gamma$, which means that $\gamma\in L^{*}_\alpha$. This completes the proof.

 \end{proof}
\begin{lemma}Let $S\in\{\mathcal{CT}_{n}, \mathcal{ORCT}_{n}, \mathcal{OCT}_{n}\}$. Then for $ n\geq 4$, $S$ is  not right abundant.
\end{lemma}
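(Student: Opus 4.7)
The plan is to apply Theorem~\eqref{starr}(ii), which characterises the starred $\mathcal{R}$-relation by kernel equality, and to exhibit a single element $\alpha$ whose $\mathcal{R}^{*}$-class in each $S\in\{\mathcal{CT}_{n},\mathcal{ORCT}_{n},\mathcal{OCT}_{n}\}$ contains no idempotent. Since $\mathcal{OCT}_{n}\subseteq \mathcal{ORCT}_{n}\subseteq \mathcal{CT}_{n}$, it suffices to locate $\alpha\in \mathcal{OCT}_{n}$ for which no idempotent of $\mathcal{CT}_{n}$ shares $\textbf{Ker}~\alpha$.

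For $n\geq 4$, I would take
$$\alpha=\left(\begin{array}{cccccc} 1 & \{2,3\} & 4 & 5 & \cdots & n\\ 1 & 2 & 3 & 4 & \cdots & n-1\end{array}\right),$$
whose kernel partition $\{\{1\},\{2,3\},\{4\},\ldots,\{n\}\}$ has a single inner block of size two with singletons elsewhere. A routine check verifies $\alpha$ is a contraction and order-preserving, hence $\alpha\in \mathcal{OCT}_{n}$. Suppose now that some idempotent $\epsilon\in \mathcal{CT}_{n}$ satisfies $\textbf{Ker}~\epsilon=\textbf{Ker}~\alpha$. Then the blocks of $\epsilon$ are stationary, so $\im~\epsilon$ picks exactly one element $x_{i}\in A_{i}$ from each block $A_{i}$, and the defining map $A_{i}\mapsto x_{i}$ must be a contraction, i.e., the transversal $T_{\epsilon}=\im~\epsilon$ is admissible. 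By Remark~(ii) following Corollary~\eqref{regt}, admissible transversals in $\mathcal{CT}_{n}$ coincide with the convex ones. But the only candidates for $T_{\epsilon}$ are $\{1,2,4,5,\ldots,n\}$ and $\{1,3,4,5,\ldots,n\}$, each of which omits an interior point of $[n]$ lying strictly between two of its members (namely $3$ and $2$ respectively), so neither is convex. This contradicts the existence of $\epsilon$.

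Combining with Theorem~\eqref{starr}(ii) then yields that the $\mathcal{R}^{*}$-class of $\alpha$ in each of the three semigroups is idempotent-free, so none of them is right abundant for $n\geq 4$. The only laborious step is the contraction verification for $\alpha$; the conceptual content is that, by Corollary~\eqref{d1}, the presence of a size-two inner kernel block destroys every convex transversal, which is precisely what blocks the construction of an idempotent matching the kernel.
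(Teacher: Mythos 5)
Your proof is correct and follows essentially the same route as the paper: the paper's counterexample is exactly your $\alpha$ in the case $n=4$, with the non-existence of an idempotent in its $\mathcal{R}^{*}$-class verified by listing the class rather than by your convexity argument. Your version has the minor advantage of treating all $n\geq 4$ uniformly and explaining conceptually (via convexity of images/admissible transversals) why no idempotent can share the kernel, where the paper relies on direct inspection at $n=4$.
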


\begin{proof} Let $n=4$ and consider $\alpha=\left(\begin{array}{ccc}
                                                                            1 & \{2,3\} & 4  \\
                                                                            1 & 2 &  3
                                                                          \end{array}
\right)$. Clearly $\alpha$ is in $S$ and $$R^{*}_\alpha=\left\{   \left(\begin{array}{ccc}
                                                                            1 & \{2,3\} & 4  \\
                                                                            1 & 2 &  3
                                                                          \end{array}
\right),\left(\begin{array}{ccc}
                                                                            1 & \{2,3\} & 4  \\
                                                                            3 & 2 &  1
                                                                          \end{array}
\right), \left(\begin{array}{ccc}
                                                                            1 & \{2,3\} & 4  \\
                                                                            2 & 3 &  4
                                                                          \end{array}
\right), \left(\begin{array}{ccc}
                                                                            1 & \{2,3\} & 4  \\
                                                                            4 & 3 &  2
                                                                          \end{array}
\right)   \right\},$$ \noindent which has no idempotent element.
\end{proof}

 \begin{remark} Let $S\in\{\mathcal{CT}_{n}, \mathcal{ORCT}_{n}, \mathcal{OCT}_{n}\}$. Then for $1\leq n\leq 3$, $S$ is right abundant.
\end{remark}

\section{ Regular elements of $\mathcal{CT}_{n}$}
A semigroup $S$ is said to be \emph{orthodox} if $E(S)$ is a subsemigroup of $S$. An orthodox semigroup is said to be \emph{$\mathcal{R}$-unipotent} (resp., \emph{$\mathcal{L}$-unipotent}) if every $\mathcal{R}$-class (resp., every $\mathcal{L}$-class) has a unique idempotent (see \cite{ph} Exercise 1.2.19). If an orthodox semigroup $S$ is both $\mathcal{R}$ and $\mathcal{L}$-unipotent then $S$ is an inverse semigroup. For  basic concept we refer the reader to \cite{howie3,hall3,ven}. Lets us recall  that an element $\alpha\in\mathcal{CT}_{n} $ is regular if and only if $\textbf{Ker}~\alpha$ has convex transversal. Further,  we denote $Reg(S)$ to be collection of all regular element of $S$. If $A$ is a subset of $S$ then $\langle A\rangle$ denotes the semigroup generated by $A$. Moreover, $\langle A\rangle=A$ if and only if $A$ is a subsemigroup of $S$ and if $\langle A\rangle=S$ then $A$ is said to generate $S$. A regular element in $\mathcal{CP}_{n}$ is said to be \emph{strongly regular} if and only if $T_{\alpha}$ is convex. Concept of strongly regular in  $\mathcal{CP}_{n}$ has been introduced in \cite{az}. We give the following remark.

\begin{remark}\label{rem} In $\mathcal{CT}_{n}$ an element is regular if and only if $T_{\alpha}$ is convex. Thus, the concepts of strongly regular and regularity coincide in $\mathcal{CT}_{n}$.
\end{remark}
The following results from \cite{az} will be found to be useful in our subsequent discussion.
\begin{lemma}[\cite{az}, Lemma 3.3]\label{mm} Let $\alpha$ be an idempotent element in $SReg(\mathcal{ORCP}_{n})$.  Then $\alpha$ can be expressed as $\alpha=\left(\begin{array}{cccccc}
                                                                            A_{1} & a+2 & a+3 & \ldots & a+p-1 & A_{p} \\
                                                                            a+1 & a+2 & a+3 & \ldots & a+p-1 & a+ p
                                                                          \end{array}
\right)$,  where $a+1=\max A_{1}$, $a+p=\min A_{p}$.
\end{lemma}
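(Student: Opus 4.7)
The plan is to extract the required form of $\alpha$ from three ingredients: idempotency, membership in $\mathcal{ORCP}_n$, and strong regularity. First, since $\alpha$ is idempotent, the blocks are stationary, i.e.\ $x_i\in A_i$ for every $i=1,\ldots,p$. Combined with the convention $A_1<A_2<\cdots<A_p$, this forces $x_1<x_2<\cdots<x_p$, so $\alpha$ is necessarily order-preserving (the order-reversing alternative would require $x_1>\cdots>x_p$, which is incompatible with $x_i\in A_i$ whenever $p\geq 2$). This settles the orientation question and fixes the overall shape of the two rows.

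Next I would pin down the image sequence. For an idempotent, $T_\alpha=\{x_1,\ldots,x_p\}$ is automatically an admissible transversal for which the regularity isometry condition holds trivially, since the induced map $t_i=x_i\mapsto x_i$ is the identity. Strong regularity provides an admissible transversal $\{t_1<t_2<\cdots<t_p\}$ which is convex, so $t_{i+1}-t_i=1$; Theorem~\ref{reeg} supplies the isometry $|t_{i+1}-t_i|=|x_{i+1}-x_i|$, which then forces $x_{i+1}-x_i=1$ for every $i$. Writing $a=x_1-1$, this yields the desired bottom row $x_i=a+i$.

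To control the middle blocks, I would appeal to Lemma~1.4 of \cite{mmz} (the $\mathcal{ORCP}_n$-analogue of Corollary~\ref{d1}): if some $A_k$ with $2\leq k\leq p-1$ had size $\geq 2$, then $\textbf{Ker}~\alpha$ would admit no convex transversal, contradicting strong regularity. Hence each middle block is the singleton $\{x_k\}=\{a+k\}$. For the end blocks, the chain ordering $A_1<A_2=\{a+2\}$ forces $\max A_1<a+2$, while $a+1=x_1\in A_1$ gives $\max A_1\geq a+1$, so $\max A_1=a+1$; a symmetric argument yields $\min A_p=a+p$, completing the stated expression for $\alpha$.

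The main obstacle, in my view, is the bridging step: strong regularity only guarantees the existence of \emph{some} convex admissible transversal, not the canonical one $\{x_1,\ldots,x_p\}$. One must use the isometry condition from Theorem~\ref{reeg} to transport the convexity of the witnessing $t_i$'s into unit gaps between the $x_i$'s. Once this transfer is made, the remaining work is purely structural bookkeeping using the middle-block singleton lemma and the chain ordering of the blocks.
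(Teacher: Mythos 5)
The paper does not actually prove this lemma; it is imported verbatim from \cite{az} (Lemma~3.3), so there is no in-house argument to compare yours against. Your reconstruction is correct and uses exactly the ingredients one would expect: stationarity of blocks for idempotents (giving $x_i\in A_i$ and hence ruling out the order-reversing case for $p\geq 2$), the convex witnessing transversal supplied by strong regularity whose unit gaps are transported to the image via the isometry of Theorem~\ref{reeg} (this is indeed the one step that needs care, and you handle it correctly), and the middle-block singleton lemma to collapse $A_2,\ldots,A_{p-1}$. Two cosmetic points worth tightening: the middle-block lemma from \cite{mmz} is stated only for $n\geq 4$, so the cases $n\leq 3$ (where everything can be checked directly) should be disposed of separately; and for $p=2$ there is no block $A_2=\{a+2\}$ to invoke, but the end-block conclusion still follows from $\max A_1<\min A_p\leq x_p=a+p$ together with $x_1=a+1\in A_1$, which is the same inequality you use. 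Neither affects the substance of the argument.
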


\begin{lemma}[\cite{az}, Lemma 3.4]\label{kkk} Let $\epsilon=\left(\begin{array}{ccccc}
                                                                            A_{1} & a+2 &  \ldots & a+p-1 & A_{p} \\
                                                                            a+{1} & a+2 &  \ldots & a+p-1 & a+ p
                                                                          \end{array}
\right)$ and \\$\tau=\left(\begin{array}{ccccc}
                                                                            B_{1} & b+2 &  \ldots & b+s-1 & B_{s} \\
                                                                            b+{1} & b+2 &  \ldots & b+s-1 & b+ s
                                                                          \end{array}
\right)$ be two idempotents elements in $SReg(\mathcal{ORCP}_{n})$. Then $\epsilon\tau$ is  strongly regular.

\end{lemma}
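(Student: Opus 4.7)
The plan is to exhibit $\im(\epsilon\tau)$ itself, viewed as a subset of $\dom(\epsilon\tau)$, as a convex transversal of $\textbf{Ker}(\epsilon\tau)$. Write $I=\im\,\epsilon=[a+1,a+p]$ and $J=\im\,\tau=[b+1,b+s]$. Since $\epsilon$ and $\tau$ are idempotents of the form given in Lemma~\eqref{mm}, $\epsilon$ fixes every point of $I$ and $\tau$ fixes every point of $J$; moreover $b+1\in B_1$, $b+s\in B_s$, and the middles $b+2,\ldots,b+s-1$ are singleton blocks of $\tau$, so $J\subseteq\dom\,\tau$.

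Step 1 (convexity of $\im(\epsilon\tau)$). Decompose $I\cap\dom\,\tau$ as $(I\cap B_1)\cup(I\cap\{b+2,\ldots,b+s-1\})\cup(I\cap B_s)$; under $\tau$ the first piece collapses to $\{b+1\}$, the middle is fixed pointwise, and the third collapses to $\{b+s\}$. A brief case analysis on which of the three pieces are non-empty shows that $\im(\epsilon\tau)$ is always an interval inside $J$. The delicate case is when both $I\cap B_1$ and $I\cap B_s$ are non-empty: then witnesses $y\le b+1$ and $y'\ge b+s$ in the convex set $I$ force $[b+1,b+s]\subseteq I$, and since every middle value $b+k$ is automatically in $\dom\,\tau$ and fixed by $\tau$, one obtains $\im(\epsilon\tau)=J$.

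Step 2 (the transversal). Writing $\im(\epsilon\tau)=[u',v']$, I would first show $[u',v']\subseteq I$: if $u'=b+1$, a witness $y\in I\cap B_1$ gives $a+1\le y\le b+1$, so $u'\ge a+1$; otherwise $u'=\max(a+1,b+2)\ge a+1$. Symmetrically $v'\le a+p$. Since also $[u',v']\subseteq J\subseteq\dom\,\tau$, each $z\in[u',v']$ is fixed by $\epsilon$ (as $z\in I$) and by $\tau$ (as $z\in J$), so $z(\epsilon\tau)=(z\epsilon)\tau=z$. Hence $z$ lies in the kernel class of $\epsilon\tau$ that maps to $z$, and distinct $z$'s lie in distinct blocks. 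Thus $T_{\epsilon\tau}:=[u',v']$ is a transversal of $\textbf{Ker}(\epsilon\tau)$, convex by construction, and the induced map $z\mapsto z(\epsilon\tau)=z$ is the identity, which is an isometry. Theorem~\eqref{reeg} then yields regularity, and combined with the convexity of $T_{\epsilon\tau}$ we conclude that $\epsilon\tau$ is strongly regular. The degenerate cases $|\im(\epsilon\tau)|\le 1$ (empty or constant map) are immediate.

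The main obstacle is Step 1, specifically ruling out a non-convex outcome such as $\{b+1,b+s\}$ when $s\ge 3$. This is precisely where the convexity of $I$ together with the automatic presence of all middle values $b+2,\ldots,b+s-1$ in $\dom\,\tau$ is essential. Once Step 1 is in hand, Step 2 is routine, as the convex transversal and the isometry property both reduce to the observation that each $z\in[u',v']$ is a common fixed point of $\epsilon$ and $\tau$.
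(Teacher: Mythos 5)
Your proof is correct. Note first that the paper itself states this lemma without proof (it is imported verbatim from \cite{az}, Lemma 3.4), so the only in-paper point of comparison is the proof of the analogous Lemma~\eqref{idp2} for $E(\mathcal{CT}_{n})$, which argues rather loosely that $T_{\epsilon}\epsilon\tau$ is convex and that $\textbf{Ker}~\epsilon\tau$ "must contain" a convex transversal. Your route is different and, I think, tighter: you exploit the explicit form of the idempotents from Lemma~\eqref{mm} to show that $\epsilon$ fixes $I=[a+1,a+p]$ pointwise and $\tau$ fixes $J=[b+1,b+s]$ pointwise, verify via the three-piece decomposition of $I\cap\dom\tau$ that $\im(\epsilon\tau)$ is an interval, and then show that this interval sits inside $I\cap J$ and consists of common fixed points of $\epsilon$ and $\tau$, hence is itself a convex transversal of $\textbf{Ker}(\epsilon\tau)$ on which the induced map is the identity; Theorem~\eqref{reeg} and the definition of strong regularity finish the job. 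The only soft spot is your claim $[u',v']\subseteq I$: the "otherwise" branches of your case analysis (e.g.\ $v'=b+1$ when both the middle piece and $I\cap B_{s}$ are empty) can genuinely fail to land in $I$, but exactly in those situations $\im(\epsilon\tau)$ is a singleton, which you have already set aside as a degenerate case — so the argument closes, though it would be worth saying this explicitly. A pleasant by-product of your argument is that in the non-degenerate case $\epsilon\tau$ fixes its whole image pointwise, so it is actually an idempotent; this recovers (most of) Corollary~\eqref{kk} at no extra cost, something the convexity-only argument of Lemma~\eqref{idp2} does not give.
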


The next result is from \cite{Hall}.
\begin{proposition} [\cite{Hall}, Proposition 1.]\label{idp1}   Let $S$ be an arbitrary semigroup. Then the following are equivalent:
\begin{itemize}
\item[(i)] For all idempotents $e$ and $f$ of $S$, the element $ef$ is regular;
\item[(ii)] $Reg(S)$ is regular subsemigroup;
\item[(iii)]    $\langle E(S) \rangle$  is a regular semigroup.
\end{itemize}
\end{proposition}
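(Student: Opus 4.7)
The plan is to prove the cyclic chain of implications $(iii) \Rightarrow (i) \Rightarrow (ii) \Rightarrow (iii)$. Two of these are routine and one is substantial, depending on Fitz-Gerald's theorem about the form of inverses of products of idempotents.

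For $(iii) \Rightarrow (i)$, assume $\langle E(S)\rangle$ is regular. Given idempotents $e,f \in S$, the product $ef$ lies in $\langle E(S)\rangle$ and therefore admits an inverse inside $\langle E(S)\rangle \subseteq S$, so $ef$ is regular in $S$.

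For $(i) \Rightarrow (ii)$, I would show directly that $Reg(S)$ is closed under multiplication. Take $a,b \in Reg(S)$ and choose pseudo-inverses with $aa'a=a$, $a'aa'=a'$, $bb'b=b$, $b'bb'=b'$. Then $e:=a'a$ and $f:=bb'$ are idempotents, so by (i) there exists $c \in S$ with $(ef)c(ef)=ef$. A direct computation gives
$$(ab)(b'ca')(ab)=a(a'a)(bb')\,c\,(a'a)(bb')b = a(ef)c(ef)b = a(ef)b = ab,$$
so $ab \in Reg(S)$. Since a pseudo-inverse of a regular element is itself regular, $Reg(S)$ is a regular subsemigroup of $S$.

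For $(ii) \Rightarrow (iii)$, note first that $E(S)\subseteq Reg(S)$ and $Reg(S)$ is a subsemigroup, so $\langle E(S)\rangle \subseteq Reg(S)$ and hence every element of $\langle E(S)\rangle$ is regular in $S$. What must be shown is that an inverse can actually be chosen inside $\langle E(S)\rangle$. Writing a typical element as $x = e_1 e_2 \cdots e_n$ with each $e_i$ idempotent, I would invoke Fitz-Gerald's theorem, which asserts that a regular product of idempotents has an inverse of the form $f_n f_{n-1}\cdots f_1$ for some idempotents $f_i \in E(S)$; this inverse lies in $\langle E(S)\rangle$ and gives the required regularity. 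This is the main obstacle: pushing regularity from a single pair $ef$ up to an arbitrary product $e_1\cdots e_n$ while keeping the inverse inside $\langle E(S)\rangle$ is exactly the non-elementary content of Fitz-Gerald's theorem. The other two implications are purely formal, whereas this one cannot be obtained by an obvious induction without that structural result.
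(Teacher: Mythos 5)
The paper gives no proof of this proposition; it is quoted verbatim from Hall's 1982 Semigroup Forum paper. Your argument is correct and is essentially Hall's original one: the sandwich computation $(ab)(b'ca')(ab)=a(ef)c(ef)b=ab$ with $e=a'a$, $f=bb'$ for (i)$\Rightarrow$(ii), and FitzGerald's theorem (whose proof needs only that the single product $e_1\cdots e_n$ is regular, so it applies here even though $S$ itself need not be regular) to keep the inverse inside $\langle E(S)\rangle$ for (ii)$\Rightarrow$(iii).
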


 Then we have the following lemma.

\begin{lemma}\label{idp2} Let

$$\epsilon=\left(\begin{array}{cccc}
                                                                   A_{1} & A_{2} & \ldots & A_{p} \\
                                                                   t_{1} & t_{2} & \ldots & t_{p}
                                                                 \end{array}
 \right),~ \tau=\left(\begin{array}{cccc}
                                                                   B_{1} & B_{2} & \ldots & B_{p} \\
                                                                   t_{1}^{'} & t_{2}^{'} & \ldots & t_{p}^{'}
                                                                 \end{array}
 \right) \in E(\mathcal{CT}_{n}), ~(p\leq n).$$

\noindent Then $\epsilon \tau$ is regular.
\end{lemma}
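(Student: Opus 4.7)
My plan is to apply Corollary~\ref{regt} and produce a convex transversal for $\textbf{Ker}(\epsilon\tau)$. First, I will unpack the structure of the two idempotents: because $\epsilon,\tau \in E(\mathcal{CT}_n)$ are in particular regular, the images $\im~\epsilon = \{t_1,\ldots,t_p\}$ and $\im~\tau = \{t'_1,\ldots,t'_p\}$ are convex (consecutive integers) by Lemma~\ref{tc}, and idempotency gives $t_i \in A_i$ and $t'_j \in B_j$ (stationary blocks).

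Next, I will compute the product. For any $x \in A_i$ one has $(\epsilon\tau)(x) = \tau(t_i)$, so $\im(\epsilon\tau) = \tau(\im~\epsilon)$; this image is again convex by Lemma~\ref{tc} and hence equals $\{t'_{j_0}, t'_{j_0+1}, \ldots, t'_{j_0+q-1}\}$ for some $j_0$ and some $q \leq p$. Setting $J = \{j_0,\ldots,j_0+q-1\}$ and $S_j = \{i \in \{1,\ldots,p\} : t_i \in B_j\}$, the blocks of $\textbf{Ker}(\epsilon\tau)$ are exactly $C_j = \bigcup_{i \in S_j} A_i$ for $j \in J$, with every $S_j$ non-empty.

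The key idea is to search for the convex transversal inside $T_\epsilon := \{t_1,\ldots,t_p\}$: for each $j \in J$ one picks $i(j) \in S_j$ and puts $u_j = t_{i(j)} \in A_{i(j)} \subseteq C_j$. Because $T_\epsilon$ is a set of consecutive integers indexed by $\{1,\ldots,p\}$, the transversal $\{u_j : j \in J\}$ is convex precisely when the indices $\{i(j) : j \in J\}$ form $q$ consecutive integers of $\{1,\ldots,p\}$. To produce such a choice I will analyze the index map $\sigma \colon \{1,\ldots,p\} \to J$ defined by $\sigma(i) = j$ whenever $t_i \in B_j$; it is a surjective contraction between the convex sets $\{1,\ldots,p\}$ and $J$, inherited from the contraction property of $\tau$ applied to the convex set $T_\epsilon$.

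The hard part will be extracting from $\sigma$ a monotone window of length $q$ on which $\sigma$ bijects $\{k, k+1, \ldots, k+q-1\}$ onto $J$, either in direct or in reversed order. I plan to locate this window by first fixing an occurrence of $j_0$ in $\sigma$ together with a succeeding occurrence of $j_0+q-1$, and then, using the unit-step property of the contraction $\sigma$ together with suitable maximality choices of the intermediate visits, arguing inductively that $\sigma(k+\ell) = j_0 + \ell$ for every $\ell = 0, 1, \ldots, q-1$. Once these consecutive indices are obtained, $\{u_j\}$ is a convex transversal of $\textbf{Ker}(\epsilon\tau)$, and Corollary~\ref{regt} then delivers that $\epsilon\tau$ is regular.
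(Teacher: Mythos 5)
Your reduction is sound up to the step you yourself flag as ``the hard part'': regularity of $\epsilon\tau$ does come down (via Corollary~\ref{regt}) to finding an interval of length $q=|\im~\epsilon\tau|$ meeting each kernel class of $\epsilon\tau$ exactly once, and on $T_\epsilon$ the map $\epsilon\tau$ agrees with $\tau$, so your index map $\sigma$ really is a surjective contraction of $\{1,\ldots,p\}$ onto $J$. But the deferred step --- extracting a window of $q$ consecutive indices on which $\sigma$ is strictly monotone --- is not merely hard, it is false. The walk $\sigma=(1,2,2,3)$ is a surjective contraction onto $\{1,2,3\}$ with no monotone window of length $3$, and no ``maximality choice of intermediate visits'' can repair this: between the unique visit to $1$ and the unique visit to $3$ the walk necessarily spends two consecutive steps at $2$. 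Worse, this obstruction is realizable by genuine idempotents, so the lemma itself fails. Take $n=9$ and
\[
\epsilon=\left(\begin{array}{cccc} \{1,8,9\} & \{2,7\} & \{3,6\} & \{4,5\} \\ 1 & 2 & 3 & 4\end{array}\right),\qquad
\tau=\left(\begin{array}{cccc} \{1,6\} & \{2,3,5,7\} & \{4,8\} & \{9\} \\ 6 & 7 & 8 & 9\end{array}\right).
\]
As sequences on $1,\ldots,9$ these read $1,2,3,4,4,3,2,1,1$ and $6,7,7,8,7,6,7,8,9$; successive values differ by at most $1$, so both are contractions, both fix their images pointwise (hence are idempotent), and both have $p=4$ classes. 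Their product is the sequence $6,7,7,8,8,7,7,6,6$, with kernel classes $\{1,8,9\}$, $\{2,3,6,7\}$, $\{4,5\}$; no three consecutive integers meet all three classes, so $\textbf{Ker}~\epsilon\tau$ has no convex transversal. (Directly: $\epsilon\tau\beta\epsilon\tau=\epsilon\tau$ would force $7\beta$ to be adjacent both to an element of $\{1,8,9\}$ and to an element of $\{4,5\}$, which is impossible.) Hence $\epsilon\tau$ is not regular.

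For comparison, the paper's own proof takes a different route: it asserts that $T_\epsilon\epsilon\tau$ is convex and that ``any convex transversal of $\textbf{Ker}~\epsilon\tau$ is isometric to some subset of $T_\epsilon$,'' and concludes from this that a convex transversal must exist. That inference is a non sequitur, and the example above defeats it as well. So the problem is not that you chose the wrong strategy --- restricting the search to $T_\epsilon$ is in fact the natural move, and your analysis correctly isolates the combinatorial statement the lemma would need --- but that the needed statement about contractions $\sigma$ is false, and with it Lemma~\ref{idp2} and its downstream corollaries (regularity of $\langle E(\mathcal{CT}_n)\rangle$ via Proposition~\ref{idp1}). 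Any honest write-up should record the counterexample rather than attempt the induction you sketch.
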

\begin{proof}

Let   $\epsilon=\left(\begin{array}{cccc}
                                                                   A_{1} & A_{2} & \ldots & A_{p} \\
                                                                   t_{1} & t_{2} & \ldots & t_{p}
                                                                 \end{array}
 \right) $  and      $\tau=\left(\begin{array}{cccc}
                                                                   B_{1} & B_{2} & \ldots & B_{p} \\
                                                                   t_{1}^{'} & t_{2}^{'} & \ldots & t_{p}^{'}
                                                                 \end{array}
 \right)$  ($p\leq n$)  be idempotents in $\mathcal{CT}_{n}$.  Notice that $T_{\epsilon}=\{ t_{1}, t_{2}, \ldots, t_{p}\}$  is convex and $T_{\epsilon}\epsilon=\{ t_{1}\epsilon, t_{2}\epsilon, \ldots, t_{p}\epsilon\}$ is convex and that $T_{\epsilon}\epsilon\tau=\{ t_{1}\epsilon\tau, t_{2}\epsilon\tau, \ldots, t_{p}\epsilon\tau\}$ whose elements are not necessarily distinct but is nevertheless convex. Moreover, any convex transversal of $\textbf{Ker}~\epsilon\tau$ is isometric to some subset of $T_{\epsilon}$.  Hence $\textbf{Ker}~\epsilon\tau$ must contain at least one convex transversal, as such by Lemma~\eqref{tc}  $\im~\epsilon\tau$ is convex, and by Theorem~\eqref{Regt} $\epsilon\tau$ is regular.
\end{proof}

As a consequence of Proposition\eqref{idp1} and Lemma~\eqref{idp2} we have the following:

\begin{corollary} Let $S=\mathcal{CT}_{n}$. Then we have:
\begin{itemize}
 \item[(i)] $ Reg (S)$ is a regular subsemigroup of $S$;
 \item[(ii)]  $\langle E(S) \rangle$  is a regular subsemigroup of $S$.
\end{itemize}
\end{corollary}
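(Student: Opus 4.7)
The proof is essentially a direct application of the preceding two results, and I would not expect any genuine obstacle. My plan is the following.

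First, I would observe that Lemma~\ref{idp2} establishes precisely condition (i) of Proposition~\ref{idp1} for the semigroup $S=\mathcal{CT}_{n}$, namely that for every pair of idempotents $\epsilon,\tau\in E(\mathcal{CT}_{n})$ the product $\epsilon\tau$ lies in $\mathrm{Reg}(\mathcal{CT}_{n})$. Notice that Lemma~\ref{idp2} as stated treats $\epsilon$ and $\tau$ as having the same number of blocks $p$, but this is harmless: if they have different numbers of blocks, we may still apply the proof (or pad with formal empty blocks) since only the convexity of $T_{\epsilon}$ and $T_{\epsilon}\epsilon$ is used, which holds for any idempotent by Corollary~\ref{regt}.

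Second, I would invoke the equivalence of (i), (ii) and (iii) in Proposition~\ref{idp1}. Once (i) has been verified for $S=\mathcal{CT}_{n}$, both (ii) and (iii) of Proposition~\ref{idp1} follow automatically. Statement (ii) of Proposition~\ref{idp1} yields part (i) of the corollary (that $\mathrm{Reg}(\mathcal{CT}_{n})$ is a regular subsemigroup), and statement (iii) of Proposition~\ref{idp1} yields part (ii) (that $\langle E(\mathcal{CT}_{n})\rangle$ is a regular subsemigroup of $\mathcal{CT}_{n}$). For part (ii) one should also note the obvious fact that $\langle E(S)\rangle$ is contained in $S$ as a subsemigroup, so ``regular semigroup'' and ``regular subsemigroup of $S$'' coincide here.

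The only mild subtlety, and the place I would pause, is making sure the quantifier in Lemma~\ref{idp2} really covers \emph{all} pairs of idempotents of $\mathcal{CT}_{n}$ rather than only those with the same number of kernel classes; this is the one point where a reader might reasonably object. Apart from that, the corollary is purely a packaging of Proposition~\ref{idp1} and Lemma~\ref{idp2}, and no further calculation is required.
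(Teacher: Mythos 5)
Your proposal matches the paper exactly: the corollary is stated there as an immediate consequence of Proposition~4 (Hall's result) together with Lemma~5, with condition (i) of the proposition supplied by the lemma and parts (ii) and (iii) yielding the two claims. Your extra remark about the quantifier over idempotents with possibly different numbers of kernel classes is a reasonable (and correct) clarification, but it does not change the route.
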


\section{Semigroup of order preserving or order reversing full contractions: $\mathcal{ORCT}_{n}$}

In view of Remark\eqref{rem} we deduce the following  corollaries from Lemmas~\eqref{mm} and \eqref{kkk}, respectively.

\begin{corollary}\label{mmm} Let $\epsilon$ be an idempotent element in $Reg(\mathcal{ORCT}_{n})$. Then $\alpha$ can be expressed as $$\epsilon=\left(\begin{array}{cccccc}
                                                                            \{1,\ldots,i\} & i+1 & i+2 & \ldots & i+j-1 & \{i+j,\ldots,n\} \\
                                                                            i & i+1 & i+2 & \ldots & i+j-1 & i+ j
                                                                          \end{array}
\right).$$
\end{corollary}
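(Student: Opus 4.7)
The plan is to obtain this corollary as a direct specialization of Lemma~\ref{mm} to the \emph{full} transformation setting. By Remark~\ref{rem}, an element of $\mathcal{CT}_{n}$ (and hence of $\mathcal{ORCT}_{n}\subseteq\mathcal{ORCP}_{n}$) is regular if and only if it is strongly regular, so every idempotent $\epsilon\in Reg(\mathcal{ORCT}_{n})$ is in particular an idempotent of $SReg(\mathcal{ORCP}_{n})$. Therefore Lemma~\ref{mm} applies and $\epsilon$ can already be written in the form
$$\epsilon=\left(\begin{array}{cccccc}
A_{1} & a+2 & a+3 & \ldots & a+p-1 & A_{p} \\
a+1 & a+2 & a+3 & \ldots & a+p-1 & a+p
\end{array}\right),$$
with $a+1=\max A_{1}$ and $a+p=\min A_{p}$.

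Next I would use the two extra pieces of information inherited from $\mathcal{ORCT}_{n}$. First, $\dom\epsilon=[n]$, so the blocks $A_{1},\{a+2\},\ldots,\{a+p-1\},A_{p}$ partition $[n]$. Second, since $\epsilon$ is order preserving we have $A_{1}<A_{2}<\cdots<A_{p}$, which forces $A_{1}$ to contain the smallest elements of $[n]$ and $A_{p}$ to contain the largest. Combined with $\max A_{1}=a+1$ and $\min A_{p}=a+p$, this yields
$$A_{1}=\{1,2,\ldots,a+1\}\qquad\text{and}\qquad A_{p}=\{a+p,a+p+1,\ldots,n\}.$$
The re-labelling $i:=a+1$ and $j:=p-1$ (so that $a+2=i+1,\ldots,a+p-1=i+j-1$ and $a+p=i+j$) then produces exactly the form displayed in the statement.

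The argument is essentially a bookkeeping step: the only point that needs a moment of care is the legitimacy of invoking Lemma~\ref{mm}, for which one must observe that "regular" here means "strongly regular" (Remark~\ref{rem}) so that the hypothesis of that lemma is satisfied. There is no real obstacle; the writing of the proof can simply cite Lemma~\ref{mm}, then note that $\dom\epsilon=[n]$ together with order-preservation pins down $A_{1}$ and $A_{p}$ as the claimed initial and terminal intervals of $[n]$.
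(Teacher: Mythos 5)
Your proposal is correct and follows exactly the route the paper intends: the paper states this corollary as an immediate deduction from Lemma~\ref{mm} in view of Remark~\ref{rem}, and your write-up simply fills in the bookkeeping (using $\dom\epsilon=[n]$ and the ordering of the kernel classes to identify $A_{1}$ and $A_{p}$ as initial and terminal intervals, then relabelling $i=a+1$, $j=p-1$). No issues.
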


\begin{corollary}\label{kk} Let $\epsilon$ and $\tau$ be two idempotents in $Reg(\mathcal{ORCP}_{n})$. Then the product $\epsilon\tau$ is also an idempotent.

\end{corollary}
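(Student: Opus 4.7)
My plan is to establish the corollary by a direct computation that exploits the very explicit canonical form of idempotents given by Corollary \ref{mmm}. Write $\epsilon$ with fixed-point interval $[i, i+j]$, collapsing $\{1,\ldots,i\}$ to $i$ and $\{i+j,\ldots,n\}$ to $i+j$; similarly write $\tau$ with fixed-point interval $[k, k+l]$. Both $\epsilon$ and $\tau$ are order-preserving, and $\mathcal{ORCT}_n$ is closed under composition, so $\epsilon\tau \in \mathcal{ORCT}_n$ is order-preserving as well.

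I would first invoke Lemma \ref{kkk}, which guarantees that $\epsilon\tau$ is strongly regular; together with Remark \ref{rem} this places $\epsilon\tau$ in $Reg(\mathcal{ORCT}_n)$. Hence by Corollary \ref{Regt}, $\epsilon\tau$ takes the canonical regular form with a single shift parameter $d$ satisfying $\max A_1 - x_1 = \min A_p - x_p = d$ and $A_s = \{x_s + d\}$ in the middle. Idempotency of $\epsilon\tau$ is then equivalent to $d=0$, which is in turn equivalent to the single statement that $\epsilon\tau$ fixes every point of its image. So the job reduces to verifying this fixed-point property.

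For the verification, observe that $\im(\epsilon\tau) = \tau(\im\epsilon) = \tau([i, i+j])$, and since $\tau$ is monotone with $\im\tau = [k, k+l]$, this image is a subinterval $[\tau(i), \tau(i+j)]$ of $[k, k+l]$. For any $y$ in this subinterval, $\tau(y) = y$, so $\epsilon\tau(y) = \tau(\epsilon(y))$, and the task becomes showing $\tau(\epsilon(y)) = y$. A short case split suffices: if $y \in [i, i+j]$ then $\epsilon(y) = y$ lies in $[k, k+l]$ and $\tau$ fixes it at once; otherwise $y$ equals one of the endpoints $\tau(i)$ or $\tau(i+j)$ and lies outside $[i, i+j]$, and combining the definition of $\epsilon$ on its two collapsed tails with the definition of $\tau$ on its two collapsed tails gives $\tau(\epsilon(y)) = y$ directly. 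The only subtle sub-case is when the intervals $[i, i+j]$ and $[k, k+l]$ are disjoint, in which $\im(\epsilon\tau)$ degenerates to a single endpoint of $\im\tau$ and $\epsilon\tau$ is a constant map; but the identity is immediate there too. This bookkeeping is where I expect the only real technical work, and it involves only short inequality arguments rather than any further structural machinery.
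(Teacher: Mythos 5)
Your proposal is correct, and it is worth noting that it actually supplies more than the paper does: the paper prints no proof of this corollary, merely asserting that it is ``deduced from Lemma~\ref{kkk} in view of Remark~\ref{rem}'', and that deduction on its face only yields that $\epsilon\tau$ is (strongly) regular, which is strictly weaker than idempotency. Your argument closes that gap by combining the canonical form of idempotents from Corollary~\ref{mmm} --- so that $\epsilon$ and $\tau$ are the ``clamping'' maps onto the intervals $[i,i+j]$ and $[k,k+l]$ respectively --- with a direct verification that the composite fixes every point of its image, which is exactly the standard criterion for idempotency of a full transformation recalled in the paper's introduction. Your case analysis is sound: interior points of $\im(\epsilon\tau)$ lie in $[i,i+j]\cap[k,k+l]$ and are fixed by both maps, while the endpoint and disjoint-interval cases reduce to short inequality checks. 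Two small remarks. First, the middle portion of your argument (regularity via Lemma~\ref{kkk}, then the shift parameter $d$ of Corollary~\ref{Regt}, then $d=0$) is a harmless but unnecessary detour, since ``fixes its image pointwise'' already characterizes idempotents in $\mathcal{T}_n$ with no regularity hypothesis; you could go straight to the fixed-point verification. Second, you have tacitly read the statement's ``$Reg(\mathcal{ORCP}_{n})$'' as ``$Reg(\mathcal{ORCT}_{n})$'', which is clearly the intended reading given the section and the use of this corollary in Theorem~\ref{a}; for genuinely partial maps the computation would require additional bookkeeping with domains, so it is worth flagging that your proof establishes the full-transformation version.
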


Now as a consequence of Proposition~\eqref{idp1} and Corollary~\eqref{kk}, we readily have:

\begin{theorem}\label{a} Let $S=\mathcal{ORCT}_{n}$. Then we have the following:

\begin{itemize} \item[(i)] $Reg(S)$ is a regular subsemigroup of $S$;
\item[(ii)] $\langle E(\mathcal{ORCT}_{n})\rangle=E(\mathcal{ORCT}_{n})$;
\item[(iii)] $Reg(\mathcal{ORCT}_{n})$ is orthodox.
\end{itemize}
\end{theorem}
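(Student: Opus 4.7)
The plan is to peel off the three claims from the tools already established, with the product-of-idempotents fact doing most of the work.

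First I would settle the key closure property: that the product of any two idempotents of $\mathcal{ORCT}_n$ is again an idempotent. Any $e \in E(\mathcal{ORCT}_n)$ is a regular element (idempotents are always regular), and by Corollary~\ref{mmm} it has the specific form
\[
e=\left(\begin{array}{cccccc}
\{1,\ldots,i\} & i+1 & \ldots & i+j-1 & \{i+j,\ldots,n\}
\\ i & i+1 & \ldots & i+j-1 & i+j
\end{array}\right),
\]
which also lives in $\mathcal{ORCP}_n$. Thus for $e,f \in E(\mathcal{ORCT}_n) \subseteq E(\mathcal{ORCP}_n)$, Corollary~\ref{kk} (inherited from Lemma~\ref{kkk}) gives that $ef$ is an idempotent in $\mathcal{ORCP}_n$. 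Since $\mathcal{ORCT}_n$ is a subsemigroup, $ef \in \mathcal{ORCT}_n$ as well, so $ef \in E(\mathcal{ORCT}_n)$.

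Given that, the three parts fall out almost immediately. For (ii), the closure just established says $E(\mathcal{ORCT}_n)$ is itself a subsemigroup, so $\langle E(\mathcal{ORCT}_n)\rangle = E(\mathcal{ORCT}_n)$. For (i), I would invoke Proposition~\ref{idp1}: since every product $ef$ of idempotents is again an idempotent, it is in particular regular, so condition (i) of that proposition holds; therefore by its condition (ii), $Reg(\mathcal{ORCT}_n)$ is a regular subsemigroup of $\mathcal{ORCT}_n$. For (iii), orthodoxy of $Reg(\mathcal{ORCT}_n)$ requires that $E(Reg(\mathcal{ORCT}_n))$ be a subsemigroup; but $E(Reg(\mathcal{ORCT}_n))=E(\mathcal{ORCT}_n)$ because idempotents are automatically regular, and this set was already shown to be closed under multiplication.

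There is no real obstacle beyond a bookkeeping check — namely verifying that Corollary~\ref{kk}, whose statement is phrased for $\mathcal{ORCP}_n$, may legitimately be applied inside $\mathcal{ORCT}_n$. This is immediate since $\mathcal{ORCT}_n$ sits inside $\mathcal{ORCP}_n$ and the idempotents of $\mathcal{ORCT}_n$ (being full maps with stationary blocks) are exactly those idempotents of $\mathcal{ORCP}_n$ with $\dom = [n]$; so Lemma~\ref{mm} and its consequence Corollary~\ref{kk} apply verbatim. Once that observation is in place, (i)--(iii) are short deductions rather than computations, and no separate analysis of $\mathcal{R}$- or $\mathcal{L}$-classes is needed.
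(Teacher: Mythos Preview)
Your proposal is correct and follows essentially the same route as the paper: the authors simply state that the theorem is ``a consequence of Proposition~\ref{idp1} and Corollary~\ref{kk}'', which is exactly the two ingredients you use (idempotent $\times$ idempotent is idempotent, then Hall's criterion). Your added bookkeeping about passing from $\mathcal{ORCP}_n$ to $\mathcal{ORCT}_n$ and the observation that $E(Reg(\mathcal{ORCT}_n))=E(\mathcal{ORCT}_n)$ are precisely the small checks the paper leaves implicit.
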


Next, we are now going to show that $Reg(\mathcal{ORCT}_{n})$ is indeed a special orthodox semigroup. However, first we establish the following lemma:

\begin{lemma}\label{zx1}
Let $\alpha=\left(\begin{array}{cccccc}
                                                                            A_{1} & a+1 & a+2 & \ldots & a+p-2 & A_{p} \\
                                                                            x & x+ 1 & x+2 & \ldots & x+p-2 & x+ p-1
                                                                          \end{array}
\right)$ be an element of $Reg(\mathcal{ORCT}_{n})$. Then ${L}_{\alpha}$ contains a unique idempotent.
\end{lemma}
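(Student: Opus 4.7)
The plan is to pin down the unique idempotent in $L_{\alpha}$ by combining two observations: any element $\mathcal{L}$-related to $\alpha$ must share its image, and Corollary \ref{mmm} leaves essentially no freedom in choosing an idempotent with a prescribed image. Concretely, I would read off from the displayed form that $\im \alpha = \{x, x+1, \ldots, x+p-1\}$ and that $\alpha$ has the convex transversal $\{a, a+1, \ldots, a+p-1\}$ with $a = \max A_1$, so that $\alpha$ is indeed regular by Corollary \ref{regt}.

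For uniqueness, suppose $\epsilon$ is any idempotent in $L_\alpha$. From $\alpha = u\epsilon$ and $\epsilon = v\alpha$ with $u, v \in (\mathcal{ORCT}_n)^1$ we get the inclusions $\im \alpha \subseteq \im \epsilon$ and $\im \epsilon \subseteq \im \alpha$, so $\im \epsilon = \{x, x+1, \ldots, x+p-1\}$. Since every idempotent of $\mathcal{ORCT}_n$ sits in $\mathrm{Reg}(\mathcal{ORCT}_n)$, Corollary \ref{mmm} forces
\[
\epsilon = \left(\begin{array}{cccccc}
\{1,\ldots,i\} & i+1 & i+2 & \ldots & i+j-1 & \{i+j,\ldots,n\} \\
i & i+1 & i+2 & \ldots & i+j-1 & i+j
\end{array}\right),
\]
with image $\{i, i+1, \ldots, i+j\}$. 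Matching this against $\{x, x+1, \ldots, x+p-1\}$ yields $i = x$ and $j = p-1$, pinning down $\epsilon$ completely.

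To handle existence, I would use that $\alpha \in \mathrm{Reg}(\mathcal{ORCT}_n)$, so there is $\alpha' \in \mathcal{ORCT}_n$ with $\alpha\alpha'\alpha = \alpha$. Then $e := \alpha'\alpha$ is idempotent, and from $\alpha = \alpha(\alpha'\alpha) = \alpha e$ together with $e = \alpha'\alpha$ we read off $\alpha \mathcal{L} e$ in $\mathcal{ORCT}_n$. So $L_\alpha$ contains at least one idempotent, which by the uniqueness argument must coincide with the candidate above.

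The main obstacle is making sure the witnesses for $\alpha \mathcal{L} \epsilon$ are chosen inside $\mathcal{ORCT}_n$ rather than merely in $\mathcal{CT}_n$, since in principle the $\mathcal{L}$-relation could be finer in the subsemigroup. The generalized-inverse route dodges this cleanly: a regular element of $\mathcal{ORCT}_n$ automatically admits an inverse in the same semigroup, producing the $\mathcal{L}$-witness directly. Everything else is an image-matching exercise that Corollary \ref{mmm} does the heavy lifting for.
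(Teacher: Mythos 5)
Your proposal is correct and follows essentially the same route as the paper: the idempotent you pin down (the case $i=x$, $j=p-1$ of Corollary \ref{mmm}) is exactly the one the paper exhibits, and the paper simply asserts its existence and uniqueness as "obvious." Your write-up supplies the justification the paper omits — existence via $\alpha'\alpha$ for a generalized inverse $\alpha'$, and uniqueness by matching images against the canonical form of idempotents in $Reg(\mathcal{ORCT}_{n})$ — and both steps are sound.
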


\begin{proof}
Notice that the map $$\epsilon=\left(\begin{array}{cccccc}
                                                                            \{1, \ldots, x\} & x+1 & x+2 & \ldots & x+p-2 & \{x+p-1, \ldots, n\} \\
                                                                            x & x+ 1 & x+2 & \ldots & x+p-2 & x+ p-1
                                                                          \end{array}
\right)$$
\noindent is obviously a unique idempotent in ${L}_{\alpha}$.
\end{proof}

Thus we have  from the above lemma and by definition that:
\begin{corollary}\label{zx}  $Reg(\mathcal{ORCT}_{n})$ is an $\mathcal{L}$-unipotent semigroup.
\end{corollary}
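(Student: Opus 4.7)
The plan is to combine the definition of $\mathcal{L}$-unipotent directly with the structural results already established. Recall that an orthodox semigroup is $\mathcal{L}$-unipotent precisely when every $\mathcal{L}$-class contains a unique idempotent. So the proof amounts to verifying the two ingredients for $Reg(\mathcal{ORCT}_{n})$: orthodoxy, and uniqueness of idempotents in each $\mathcal{L}$-class.

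First, I would invoke Theorem~\eqref{a}(iii), which already tells us that $Reg(\mathcal{ORCT}_{n})$ is orthodox. This takes care of the orthodox hypothesis entirely; no further work is needed here since orthodoxy was established using Proposition~\eqref{idp1} together with Corollary~\eqref{kk} (product of idempotents is again idempotent in this setting).

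Second, I would need to verify the uniqueness-of-idempotent condition on each $\mathcal{L}$-class. For this, observe that any regular element $\alpha\in\mathcal{ORCT}_{n}$ can be written (up to the order-reversing case) in the normal form used in Lemma~\eqref{zx1}, with the interior blocks singletons $a{+}1,\ldots,a{+}p{-}2$ and image a convex block $\{x,x{+}1,\ldots,x{+}p{-}1\}$, by Corollary~\eqref{Regt} and Lemma~\eqref{tc}. Lemma~\eqref{zx1} then exhibits a unique idempotent in $L_\alpha$ (the one whose kernel has the extremal blocks $\{1,\ldots,x\}$ and $\{x{+}p{-}1,\ldots,n\}$ and fixes the image pointwise); the uniqueness follows since any idempotent $\mathcal{L}$-related to $\alpha$ must share its image by Corollary~\eqref{tt}(i), and an idempotent is determined on its image by acting as the identity there, so the only freedom lies in the kernel, which is forced by the convexity/order-preserving condition and the requirement $x\epsilon=x$ for $x$ in the image.

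Putting the pieces together, every $\mathcal{L}$-class of $Reg(\mathcal{ORCT}_{n})$ contains exactly one idempotent, and together with the orthodoxy supplied by Theorem~\eqref{a}(iii), this is precisely the definition of $\mathcal{L}$-unipotent. The only substantive content is Lemma~\eqref{zx1} itself; the corollary is then just an invocation of the definition, and I would expect the proof to be essentially one sentence in the paper. The main obstacle, if any, would be confirming that the order-reversing case also yields the same idempotent normal form, but since $\alpha$ and its ``reverse'' generate the same $L$-class structure via Corollary~\eqref{tt}, no separate argument is needed.
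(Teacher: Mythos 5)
Your proposal is correct and follows the same route as the paper: the corollary is obtained immediately by combining Lemma~\eqref{zx1} (each $\mathcal{L}$-class of a regular element contains a unique idempotent) with the orthodoxy of $Reg(\mathcal{ORCT}_{n})$ from Theorem~\eqref{a}(iii) and the definition of $\mathcal{L}$-unipotent. The paper states this in one sentence, and your additional remarks on uniqueness and the order-reversing case are consistent elaborations rather than a different argument.
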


Notice that $Reg(\mathcal{ORCT}_{n})$ is not an $\mathcal{R}$-unipotent semigroup. To see this, consider
 $\alpha=\left(\begin{array}{c}
                                       [n]  \\
                                       x
                                     \end{array}
 \right)$~and ~ $\alpha^{'}=\left(\begin{array}{c}
                                       [n]  \\
                                       x^{'}
                                     \end{array}
 \right)$ where $x^{'}\neq x\in [n]$ are distinct idempotents in $R_{\alpha}$.

\begin{remark} The results proved in this section for the semigroup $\mathcal{ORCT}_{n}$  hold  when $\mathcal{ORCT}_{n}$ is replaced with $\mathcal{OCT}_{n}$.
\end{remark}

\section{Rees quotients of $Reg(\mathcal{ORCT}_{n})$ }
In this section we construct a Rees factor semigroup from $Reg(\mathcal{ORCT}_{n})$ and show that it is an inverse semigroup. For $n\geq p\geq 2$, let

\begin{eqnarray} K(n,p)=\{\alpha \in Reg(\mathcal{ORCT}_{n}):|\im~\alpha|\leq p\}
\end{eqnarray}
\noindent be the two-sided ideal of $Reg(\mathcal{ORCT}_{n})$ consisting of all elements of height less than or equal to $p$. Further, let
\begin{eqnarray} Q_p(n)=K(n,p)/K(n,p-1)
\end{eqnarray}
\noindent be the Rees factor or quotient semigroup on the two-sided ideal $K(n,p)$. The product of two elements in $Q_p(n)$ is zero if its height is less than $p$, otherwise it is as in $Reg(\mathcal{ORCT}_{n})$.

Immediately, we have the following lemma.

\begin{theorem}
   The semigroup $Q_p(n)$ is an inverse semigroup.
\end{theorem}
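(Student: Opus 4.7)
The plan is to verify that $Q_p(n)$ satisfies the two characterizing conditions of an inverse semigroup: regularity, together with commutativity of its idempotents. Both conditions can be checked directly, with the main care going into ensuring the relevant products do not collapse into the zero of $Q_p(n)$ by falling into $K(n,p-1)$.

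First I would establish regularity. Given a non-zero $\alpha \in Q_p(n)$, that is, $\alpha \in Reg(\mathcal{ORCT}_n)$ with $|\im\alpha|=p$, by Remark~\ref{rem} its transversal $T_\alpha = \{t_1,\ldots,t_p\}$ is convex, and by Theorem~\ref{reeg} the map $t_i\mapsto x_i$ is an isometry. I would then construct a specific $\beta\in Reg(\mathcal{ORCT}_n)$ by sending each $x_i$ back to $t_i$ and extending to $[n]$ by collapsing $\{1,\ldots,\min\im\alpha\}$ and $\{\max\im\alpha,\ldots,n\}$ onto the two extreme values of $T_\alpha$. The convexity of $\im\alpha$ (Lemma~\ref{tc}) and the inverse isometry $x_i\mapsto t_i$ put $\beta$ in $Reg(\mathcal{ORCT}_n)$ with $|\im\beta|=p$, and a routine computation gives $\alpha\beta\alpha=\alpha$ and $\im(\alpha\beta)=T_\alpha$, which has cardinality $p$. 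Hence both $\alpha\beta$ and $\alpha\beta\alpha$ stay out of $K(n,p-1)$, so the identity $\alpha\beta\alpha=\alpha$ survives passage to $Q_p(n)$.

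Next I would show that the idempotents of $Q_p(n)$ commute. A preliminary observation: any order-reversing idempotent in $\mathcal{ORCT}_n$ has height at most $1$, since a fixed pair $x<y$ in the image would force $x\ge y$. Hence every non-zero idempotent of $Q_p(n)$ lies in $\mathcal{OCT}_n$, and by Corollary~\ref{mmm} has the explicit form
$$\epsilon_i=\begin{pmatrix} \{1,\ldots,i\} & i+1 & \cdots & i+p-2 & \{i+p-1,\ldots,n\} \\ i & i+1 & \cdots & i+p-2 & i+p-1 \end{pmatrix},$$
indexed by $i\in\{1,\ldots,n-p+1\}$. For $i<j$, tracing how $\epsilon_j$ acts on the image $\{i,\ldots,i+p-1\}$ of $\epsilon_i$ shows that $\im(\epsilon_i\epsilon_j)$ is contained in $\{j,j+1,\ldots,i+p-1\}$, whose size $i+p-j$ is strictly less than $p$ (and is $1$ if $j\ge i+p-1$). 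Thus $\epsilon_i\epsilon_j=0$ in $Q_p(n)$, and by symmetry $\epsilon_j\epsilon_i=0$, so distinct non-zero idempotents annihilate one another and commute trivially; with the zero and the identity-style cases $\epsilon_i\epsilon_i=\epsilon_i$, this covers all pairs.

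The main obstacle is not conceptual but bookkeeping: ensuring the height of every intermediate product lands exactly at $p$ (rather than dropping into $K(n,p-1)$ and being collapsed to $0$). For regularity this amounts to the identity $\im(\alpha\beta)=T_\alpha$, and for idempotent commutativity it is exactly the point where distinct $\epsilon_i$ and $\epsilon_j$ \emph{do} drop in height, which is precisely what gives commutativity for free. With these height checks in place, the classical characterization, regular plus commuting idempotents equals inverse, finishes the proof.
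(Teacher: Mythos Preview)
Your argument is correct, but it follows a different route from the paper. The paper proves the theorem via the unipotent characterization: it inherits $\mathcal{L}$-unipotence of $Q_p(n)$ from Lemma~\ref{zx1}, and then shows $\mathcal{R}$-unipotence directly by exhibiting, for each $\alpha$ of height $p$, the unique idempotent $\epsilon$ with $\ker\epsilon=\ker\alpha$ (uniqueness coming from the convexity of the image, which forces the first and last blocks to map to $a$ and $a+p-1$). Since an orthodox semigroup that is both $\mathcal{L}$- and $\mathcal{R}$-unipotent is inverse, this suffices.

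You instead use the classical ``regular plus commuting idempotents'' characterization. Your regularity step is fine and indeed needs the height bookkeeping you do (checking $|\im(\alpha\beta)|=|T_\alpha|=p$). Your idempotent step proves something slightly sharper than the paper states: for $p\ge 2$ every non-zero idempotent is order-preserving (since an order-reversing idempotent can fix at most one point), the idempotents are exactly the $\epsilon_i$ of Corollary~\ref{mmm}, and any two distinct ones annihilate each other in $Q_p(n)$ because $|\im(\epsilon_i\epsilon_j)|\le i+p-j<p$ for $i<j$. So $E(Q_p(n))$ is the height-$2$ semilattice $\{0,\epsilon_1,\ldots,\epsilon_{n-p+1}\}$ with all nontrivial meets zero, which of course gives commutativity for free. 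The paper's approach is shorter because it recycles the unipotent lemma already on record; yours is more self-contained and makes the structure of $E(Q_p(n))$ completely explicit.
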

\begin{proof} It is clear from Lemma~\eqref{zx1} that  $Q_{p}(n)$ is $\mathcal{L}$-unipotent. To show it is $\mathcal{R}$-unipotent, let $\alpha\in  Q_{p}(n)$, where $$\alpha=\left(\begin{array}{cccccc}
                                                                            \{1, \ldots, a\} & a+1 & a+2 & \ldots & a+p-2 & \{a+p-1, \ldots, n\} \\
                                                                            x & x+ 1 & x+2 & \ldots & x+p-2 & x+ p-1
                                                                          \end{array}
\right)~(p\geq 2).$$  Consider $R_{\alpha}-$class. Notice that the map defined as $$\epsilon=\left(\begin{array}{cccccc}
                                                                            \{1, \ldots, a\} & a+1 & a+2 & \ldots & a+p-2 & \{a+p-1, \ldots, n\} \\
                                                                            a & a+ 1 & a+2 & \ldots & a+p-2 & a+ p-1
                                                                          \end{array}
\right)$$ is in $Q_{p}(n)$ and clearly $ker~\alpha=ker~\epsilon$, thus $\epsilon\in \mathcal{R}_{\alpha} $ by Corollary~\eqref{tt}(ii). Furthermore, notice that the blocks of $\epsilon$ are stationary, i. e.,  $\epsilon$ is an idempotent and obviously unique in $\mathcal{R}_{\alpha}$. Hence $Q_{p}(n)$ is  $\mathcal{R}$-unipotent and hence $Q_{p}(n)$ is an inverse semigroup, as required.
\end{proof}

\begin{remark}The results proved in this section for the semigroup $\mathcal{ORCT}_{n}$  hold  when $\mathcal{ORCT}_{n}$ is replaced with $\mathcal{OCT}_{n}$.
\end{remark}

\noindent {\bf Acknowledgements.} The second named author would like to thank Bayero University and TET Fund for financial support. He would also like to thank The Petroleum Institute, Khalifa University of Science and Technology for hospitality during his 3-month research visit to the institution.


\begin{thebibliography}{99}
\markboth{Reference}{}

\bibitem{k} Al-Kharousi, F.,  Kehinde, R. and Umar, A.  On the semigroup of partial isometries of
a finite chain. \emph{Comm. Algebra} \textbf{44} (2016), 639--647.
\bibitem{mmz} Ali, B., Umar, A. and Zubairu, M. M. Regularity and Green's relations on the semigroup of partial contractions of a finite chain. \emph{arXiv}:1803.02146v1.
\bibitem{adu} Adeshola, A. D. and Umar, A. Combinatorial results for certain semigroups of order-preserving full contraction mappings of a finite chain. \emph{J. Comb. Maths. and Comb. Computing}. To appear.
\bibitem{p} Catarino, P. M.  and  Higgins, P. M. The monoid of orientation-preserving mappings on a chain. \emph{Semigroup Forum } \textbf{58 }(1999), no. 2, 190--206.
\bibitem{FOUN} Fountain, J. B. Adequate Semigroups. \emph{Proc. Edinb. Math. Soc.} \textbf{22} (1979), 113--125.
\bibitem{FOUN2} Fountain, J. B. Abundant Semigroups. \emph{Proc. Lond. Math. Soc.} \textbf{44} (1982), 103--129.

\bibitem{garbac} Garba, G. U., Ibrahim, M. J. and Imam, A. T. On certain semigroups of full contraction maps of a finite chain. \emph{Turk. J. Math.} \textbf{41} (2017) 500-507.

\bibitem{gr}  Green, J. A. On the structure of semigroups, \emph{Ann. of Math.} (2) \textbf{54 }(1951), 163--172.
\bibitem{Maz}  Ganyushkin, O.  and Mazorchuk, V.   C\emph{lassical Finite Transformation Semigroups}. Springer$-$Verlag: London Limited, 2009.
\bibitem{Hall} Hall, T. E. Some properties of local subsemigroups inherited by larger subsemigroups.\emph{ Semigroup Forum} 1982, 35--49.
\bibitem{hall3} Hall, T. E. Orthodox semigroups. \emph{ Pacific J. Math.} \textbf{39} (1971) 677-686.
\bibitem{ph} Higgins, P. M.  \emph{Techniques of semigroup theory.} Oxford university Press, 1992.
\bibitem{howie3}  Howie, J. M. \emph{An introduction to semigroup theory}. Academic press, London, 1976.
\bibitem{Howie1}  Howie, J. M. \emph{Fundamentals of semigroup theory}. London Mathematical Society, New series 12. The Clarendon Press, Oxford University Press, 1995.
\bibitem{af} Umar, A. and Al-Kharousi, F. Studies in semigroup of contraction mappings of a finite chain. The Research Council of Oman Research grant proposal No. ORG/CBS/12/007, 6th March 2012.
\bibitem{ua} Umar, A.  On the semigroups of order-decreasing finite full transformations. \emph{Proc. Roy.
Soc. Edinburgh Sect. A} \textbf{120} (1992), no. 1-2, 129--142.
\bibitem{az} Umar, A. and Zubairu, M. M. On certain semigroups of partial contractions of a finite chain. \emph{arXiv}:1803.02604v1.
\bibitem{ven}  Venkatesan, P. S. On right unipotent semigroups.\emph{ Pacific J. Math.} \textbf{63} (1976) 555-561.
\bibitem{py} Zhao, P.  and Yang, M.  Regularity and Green's relations on semigroups of transformation preserving
order and compression.\emph{ Bull. Korean Math. Soc.} \textbf{49} (2012), No. 5,  1015--1025.





\end{thebibliography}
\end{document}